\title{The geometry of $\Pi$-invertible sheaves}
\author{Stephen Kwok}
\address{Department of Mathematics, University of Bologna, Bologna, Italy 40126}
\email{stephendiwen.kwok@unibo.it}
\begin{document}

\theoremstyle{plain} \newtheorem{thm}{Theorem}[section]
\theoremstyle{plain} \newtheorem*{mythm}{Theorem}
\theoremstyle{plain} \newtheorem{lem}[thm]{Lemma}
\theoremstyle{plain} \newtheorem{prop}[thm]{Proposition}
\theoremstyle{plain} \newtheorem{cor}[thm]{Corollary}
\theoremstyle{definition} \newtheorem{defn}{Definition}
\numberwithin{equation}{section} 

\begin{abstract}
Using the fact that $\Pi$-invertible sheaves can be interpreted as locally free sheaves of modules for the super skew field $\mathbb{D}$, we give a new construction of the $\Pi$-projective superspace $\mathbb{P}^n_{\Pi, B}$ over affine $k$ superschemes $B$, $k$ an algebraically closed field. We characterize morphisms into $\mathbb{P}^n_{\Pi, B}$ and give a new interpretation of the composition of $\Pi$-invertible sheaves in terms of the algebra of $\mathbb{D}$.
\end{abstract}

\maketitle

\section{Introduction}

Line bundles are key in classical algebraic geometry. Ample (resp. very ample) line bundles on schemes give morphisms to (resp. embeddings into) projective space, e.g. the Plucker embedding of a Grassmannian.

In the development of algebraic supergeometry, it turns out that line bundles are no longer so fundamental. For instance, over $\mathbb{C}$, generic super Grassmannians possess no ample line bundles (see, e.g. \cite{CCF} or \cite{ma1} for a proof of this fact for $Gr(2|2, 4|4)$), and therefore cannot be embedded as subsupermanifolds of super projective space $\mathbb{P}^{m|n}$ for any $m|n$.

Manin \cite{ma1} has suggested that a different concept, due to I.A. Skornyakov, should be a substitute for invertible sheaves in supergeometry: that of {\it $\Pi$-invertible sheaf}. These objects
are pairs $(S, \phi)$, where $S$ is a locally free sheaf of rank $1|1$ and $\phi$ is an odd endomorphism of $S$ such that $\phi^2 = 1$. Their transition functions can be reduced to $\mathbb{G}^{1|1}_m$, a nonabelian supergroup analogous to the usual multiplicative group $\mathbb{G}_m$.

Deligne \cite{Del1} has pointed out that over $\mathbb{C}$, $\mathbb{G}^{1|1}_m$ may be interpreted as the multiplicative supergroup $\mathbb{D}^*$ of the so called ``super skew field" $\mathbb{D}$, a noncommutative {\it central simple} superalgebra. This point of view continues to be valid over any algebraically closed field $k$ of characteristic not equal to $2$, and sheds considerable light on $\Pi$-projective geometry. Many of the basic constructions in $\Pi$-projective geometry become more transparent when interpreted in terms of the algebra of $\mathbb{D}$. This is the task of this current work.

The plan of the paper is as follows. In the first section we review basic material about the ``super skew field" $\mathbb{D}$. This object may be characterized as the unique (up to Brauer equivalence) central simple superalgebra over an algebraically closed field $k$, $char(k) \neq 2$. We define super Azumaya algebras, and extend $\mathbb{D}$ to a sheaf $\underline{\mathbb{D}}_B$ of super Azumaya algebras over a $k$-superscheme $B$. We prove some basic results about the structure of $\mathbb{D}$-modules in some key special cases.

In the category of affine algebraic $B$-superschemes, we then give a new construction of the $\Pi$-projective space $\mathbb{P}^n_{\Pi, B}$. Given a (right) $\mathbb{D}$-module $(V, \phi)$, we realize $\mathbb{P}_\Pi(V)$ as a quotient of $V \backslash \{0\}$ by the algebraic supergroup $\mathbb{G}^{1|1}_m = \mathbb{D}^*$. This result was asserted in \cite{Levin}, but no proof given. We prove that with our definition of the $\Pi$-projective space, $\underline{V} \backslash \{0\}$ becomes a $\mathbb{D}^*$-principal bundle over $\mathbb{P}_\Pi(V)$. This is enough to show that $\mathbb{P}_\Pi(V)$ is a quotient of $\underline{V} \backslash \{0\}$ by $\mathbb{D}^*$. Many of the basic results of supergeometry needed for this portion of the paper may be found in \cite{CCF}; a more detailed treatment of basic supercommutative algebra and algebraic supergeometry is contained in the Ph.D. thesis of Westra \cite{Wes}.

% Our construction is reminiscent of geometric invariant theory.%
We then briefly discuss the theory of $\Pi$-invertible sheaves. In \cite{Del1} it is noted that a $\Pi$-invertible sheaf is nothing more than a locally free sheaf of $\mathbb{D}$-modules of rank $1$. We then define a ``hyperplane bundle" $\mathcal{O}_\Pi(1)$ on $\mathbb{P}_{\Pi, B}(V)$. This $\Pi$-invertible sheaf is key in the characterization of all $B$-morphisms $X \to \mathbb{P}_{\Pi, B}(V)$. 

Finally we define a product structure on the set of $\Pi$-invertible sheaves, taking values in $1|1$ locally free sheaves, using the algebra of the super skew field $\mathbb{D}$. This product is shown to be the same as the composition of $\Pi$-invertible sheaves proposed by Voronov, Manin, and Penkov \cite{VMP}.\\

\noindent {\bf Acknowledgments.} This work grew out of the author's Ph.D. thesis. The author owes much to his advisor, V.S. Varadarajan, for his guidance, patience, insights, and moral support, all of which he shared generously. The author is also greatly indebted to P. Deligne for pointing out the interpretation of $\mathbb{G}^{1|1}_m$ and the connection between $\Pi$-invertible sheaves and $\mathbb{D}$ in \cite{Del1} and \cite{Del2}, which provided the germ of the present work.

\section{The super Azumaya algebra $\mathbb{D}$}

\subsection{The super skew field $\mathbb{D}$.} 

Let $k$ denote an algebraically closed field of characteristic $\neq 2$. Let $\mathbb{D}$ be the "super skew field" $\mathbb{D} := k[\theta]$, $\theta$ odd, $\theta^2 = -1$. $\mathbb{D}$ is a noncommutative, associative superalgebra. Any homogeneous nonzero element of $\mathbb{D}$ is invertible. As we shall see, $\mathbb{D}$ is an example of a {\it central simple superalgebra}, which notion we now define.

\begin{comment}
For the reader's convenience, we first recall some definitions. As usual, the quantities appearing in all equations below will be assumed to be $\mathbb{Z}_2$ homogenous unless otherwise stated.

If $A$, $B$ are $R$-superalgebras, with $R$ a supercommutative ring, their {\it tensor product} $A \otimes_R B$ possesses a natural structure of $R$-superalgebra, defined by:

\begin{equation*}
(a \otimes b) \cdot (c \otimes d) := (-1)^{|b||c|} ac \otimes bd
\end{equation*}\

The {\it opposite} of a superalgebra $A$ is the superalgebra $A^o$ whose underlying set is equal to that of $A$, and whose multiplication is given by:

\begin{equation*}
x \cdot_{o} y := (-1)^{|x||y|} yx
\end{equation*}\

\noindent where the multiplication on the right hand side is that of $A$.
\end{comment}

Given any $k$-superalgebra $A$, we can define a superalgebra homomorphism $\psi: A \otimes_k A^o \to
\underline{End}_k(A)$ by:

\begin{equation*}
a \otimes b \mapsto (x \mapsto (-1)^{|b||x|}axb)
\end{equation*}\

\noindent Here $A^o$ denotes the opposite superalgebra of $A$. We say that a $k$-superalgebra $A$ is {\it central} if its supercenter equals $k$, and $A$ is {\it simple} if $A$ has no non-trivial two-sided homogeneous ideals.

The super Artin-Wedderburn theorem (cf. \cite{Var}) then states that:

\begin{thm}
Let $A$ be a superalgebra over a field $k$, finite dimensional as a $k$-super vector space. Then $A$ is central simple over $k$ if and only if $\psi: A \otimes A^o \to \underline{End}_k(A)$ is an isomorphism of $k$-superalgebras.
\end{thm}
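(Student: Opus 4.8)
The plan is to prove both directions of this ``super'' analogue of the classical characterization of central simple algebras, adapting the usual argument (e.g., the one in Knus--Ojanguren or in the classical Artin--Wedderburn theory) to the $\mathbb{Z}_2$-graded setting, being careful about Koszul signs throughout.

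\textbf{The easy direction.} First I would assume that $\psi \colon A \otimes_k A^o \to \underline{\mathrm{End}}_k(A)$ is an isomorphism of $k$-superalgebras and deduce that $A$ is central simple. Surjectivity of $\psi$ forces $A$ to be simple: if $I \subseteq A$ were a nonzero proper two-sided homogeneous ideal, then $\psi(A \otimes A^o)$ would preserve $I$ (since $a x b \in I$ whenever $x \in I$), contradicting the fact that $\psi(A \otimes A^o) = \underline{\mathrm{End}}_k(A)$ acts irreducibly — more precisely, there is no proper nonzero graded subspace of $A$ stable under all of $\underline{\mathrm{End}}_k(A)$. Centrality follows similarly: an element $z$ in the supercenter of $A$ gives, via $x \mapsto zx$, an $A \otimes A^o$-module endomorphism of $A$ (one checks the sign bookkeeping: $z$ supercommuting with $A$ means $z \cdot \psi(a\otimes b) = \psi(a \otimes b) \cdot z$ as operators), hence by Schur-type reasoning (using that $A \cong A\otimes A^o/\ker$ acts on itself as a simple module once we know $A$ is simple, or directly that $\underline{\mathrm{End}}$-module endomorphisms of $A$ are scalars) $z$ must be a homogeneous scalar; an odd scalar is $0$, so $z \in k$.

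\textbf{The hard direction.} Conversely, assume $A$ is central simple; I want to show $\psi$ is an isomorphism. Since $A$ is finite-dimensional, $A \otimes_k A^o$ and $\underline{\mathrm{End}}_k(A)$ have the same $k$-dimension, so it suffices to prove $\psi$ is injective, equivalently that $\ker\psi$ is zero. Now $\ker\psi$ is a two-sided homogeneous ideal of $A \otimes_k A^o$, so it is enough to show that $A \otimes_k A^o$ is a simple superalgebra (then $\ker\psi$, being proper since $\psi(1)=\mathrm{id}\neq 0$, must vanish). This is the crux: \emph{the super tensor product of central simple superalgebras is central simple} — or at least the special case $A \otimes A^o$. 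I expect this to be the main obstacle, and I would handle it by the standard centralizer/ideal argument transported to the super setting: given a nonzero homogeneous ideal $J \subseteq A \otimes_k A^o$, pick a nonzero element of $J$ with the shortest expression $\sum_{i=1}^r a_i \otimes b_i$ with the $b_i$ homogeneous and $k$-linearly independent; using simplicity of $A$ one shows the $a_i$ can be taken to include $1$ (multiply on left and right by elements of $A\otimes 1$, using that $A\cdot a_1\cdot A = A$), and then bracketing against elements $c \otimes 1$ and using centrality of $A$ (the supercenter is just $k$) forces $r=1$ and the element to be a unit. Care is needed because ``shortest expression'' and the linear-independence manipulations must respect the grading, and every commutator must be a \emph{super}commutator with the appropriate sign.

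\textbf{Assembly.} Combining the two directions gives the theorem. I would also remark that one could instead invoke the super Artin--Wedderburn structure theory cited as \cite{Var} — classifying finite-dimensional simple superalgebras as $\underline{\mathrm{End}}$ of a free module over $k$ or over $\mathbb{D}$ — and read off the statement, but the self-contained ideal-theoretic argument above is cleaner and is the approach I would present. The only genuinely new feature relative to the ungraded case is sign vigilance in the definition of $A^o$, in the multiplication on $A \otimes_k A^o$, and in the notion of supercenter; no essentially new idea is required.
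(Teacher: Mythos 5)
Your argument is correct, but it cannot be compared against a proof in the paper because the paper does not supply one: Theorem 2.1 is quoted as the ``super Artin--Wedderburn theorem'' with a citation to Varadarajan's book, exactly the alternative you mention in your closing remark. Your self-contained route is the standard one and it does go through in the graded setting. In the easy direction, the cleanest packaging of your Schur-type step is to note that a supercentral $z$ gives $z\otimes 1$ in the supercenter of $A\otimes_k A^o\cong \underline{End}_k(A)$, whose supercenter is $k\cdot\mathrm{id}$ (purely even), so odd supercentral elements vanish and even ones are scalars; your version via $\underline{End}_{\underline{End}_k(A)}(A)=k$ is equivalent. You correctly identify the crux of the hard direction as the simplicity of $A\otimes_k A^o$, and the minimal-length ideal argument works: the two sign-sensitive points are (i) normalizing the leading coefficient to $1$ via $\sum_j c_j a_1 d_j=1$, where multiplying $\sum_i a_i\otimes b_i$ by $c_j\otimes 1$ and $d_j\otimes 1$ introduces factors $(-1)^{|b_i||d_j|}$ that must be absorbed into the choice of $d_j$, and (ii) the identity $[c\otimes 1,\sum_i a_i\otimes b_i]=\sum_i [c,a_i]_s\otimes b_i$ with the supercommutator on the right, which kills the leading term and forces the remaining $a_i$ into the supercenter $k$, reducing to an element $1\otimes b$ and then to all of $A\otimes A^o$ by simplicity of $A^o$. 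What your approach buys is a proof independent of the cited reference; what the paper's citation buys is brevity, since the result is genuinely standard (it also appears in the Deligne--Morgan notes cited as \cite{DM}).
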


\noindent We emphasize that the $\underline{End}$ appearing in the statement of the theorem is the ``internal $\underline{End}$" (i.e. the superalgebra of ungraded endomorphisms), not the categorical $End$ (i.e., the even subalgebra of even endomorphisms).

In \cite{DM}, it is shown that the super skew field $\mathbb{D}/k$ is a central simple superalgebra, and that it generates the super Brauer group $sBr$ of $k$, of Brauer equivalence classes of central simple superalgebras over $k$.

In ungraded commutative algebra, the notion of central simple algebra over a field $k$ is generalized to the category of algebras over a commutative ring by adopting the conclusion of the Artin-Wedderburn theorem as a definition. The resulting objects are called {\it Azumaya algebras}. We define the super analogue as follows:

\begin{defn}
Let $A$ be a superalgebra over a supercommutative ring $R$. $A$ is a {\it super Azumaya algebra} over $R$ if $A$ is a faithful, finitely generated projective $R$-module, and the natural homomorphism $\psi: A \otimes_R A^o \to \underline{End}_R(A)$ is an isomorphism of $R$-superalgebras.
\end{defn}

%\noindent {\bf Remark.} One could make a more sophisticated (but presumably completely equivalent) definition: namely, a super Azumaya algebra over $R$ is a sheaf of associative superalgebras over $Spec(R)$ such that . However, we will not pursue this line of thought here.\\

We then have the following:

\begin{prop}\label{prop:superAzumayaalg}
Let $k$ be an algebraically closed field, $char(k) \neq 2$ and $R$ be a commutative $k$-superalgebra. Then $\mathbb{D}_R :=\mathbb{D} \otimes_k R$ is a super Azumaya algebra over $R$.
\end{prop}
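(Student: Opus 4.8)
The plan is to verify the two conditions in the definition of super Azumaya algebra for $\mathbb{D}_R = \mathbb{D} \otimes_k R$: first, that $\mathbb{D}_R$ is a faithful, finitely generated projective $R$-module, and second, that the natural map $\psi\colon \mathbb{D}_R \otimes_R \mathbb{D}_R^o \to \underline{\operatorname{End}}_R(\mathbb{D}_R)$ is an isomorphism. The first condition is immediate: as an $R$-module $\mathbb{D}_R = R \oplus R\theta$ is free of rank $1|1$, hence finitely generated, projective, and faithful. So the content is entirely in showing $\psi$ is an isomorphism.

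For the second condition the key idea is base change. Since $\mathbb{D}$ is finite-dimensional over the field $k$ and central simple over $k$, Theorem~2.1 (super Artin--Wedderburn) gives that $\psi_k\colon \mathbb{D} \otimes_k \mathbb{D}^o \to \underline{\operatorname{End}}_k(\mathbb{D})$ is an isomorphism of $k$-superalgebras. I would then observe that forming $\mathbb{D}_R$, taking opposites, tensoring over $R$, and forming internal $\underline{\operatorname{End}}$ all commute with the base change $k \to R$ in the appropriate sense: $\mathbb{D}_R^o \cong \mathbb{D}^o \otimes_k R$ canonically, and
\[
\mathbb{D}_R \otimes_R \mathbb{D}_R^o \cong (\mathbb{D} \otimes_k R) \otimes_R (\mathbb{D}^o \otimes_k R) \cong (\mathbb{D} \otimes_k \mathbb{D}^o) \otimes_k R,
\]
while, because $\mathbb{D}$ is a finitely generated free $k$-module, the natural map $\underline{\operatorname{End}}_k(\mathbb{D}) \otimes_k R \to \underline{\operatorname{End}}_R(\mathbb{D} \otimes_k R) = \underline{\operatorname{End}}_R(\mathbb{D}_R)$ is an isomorphism of $R$-superalgebras. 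Under these identifications $\psi$ for $\mathbb{D}_R$ is precisely $\psi_k \otimes_k \operatorname{id}_R$; since $\psi_k$ is an isomorphism, so is its base change, and we are done. I would be slightly careful to check that these identifications are compatible with the $\mathbb{Z}_2$-gradings and with the Koszul signs built into the definitions of the tensor product of superalgebras, of the opposite superalgebra, and of $\psi$ itself — the sign $(-1)^{|b||x|}$ in the formula for $\psi$ must be tracked through the base-change isomorphisms, but since $R$ acts by even-or-odd scalars that commute past everything with the correct sign, no genuine obstruction arises here.

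The main obstacle, such as it is, is the verification that $\underline{\operatorname{End}}_k(\mathbb{D}) \otimes_k R \to \underline{\operatorname{End}}_R(\mathbb{D}_R)$ is an isomorphism; this is the one place where finite-dimensionality (equivalently, that $\mathbb{D}$ is a finitely generated projective $k$-module) is genuinely used, and one should state it as a small lemma — for a finitely generated free module $M$ over $k$, internal $\operatorname{Hom}$ commutes with arbitrary base change $k \to R$, in the graded sense. Everything else is formal manipulation of tensor products and opposites in the symmetric monoidal category of $R$-supermodules. An alternative, essentially equivalent, route would be to pick the explicit $k$-basis $\{1, \theta\}$ of $\mathbb{D}$, write down $\psi_k$ as an explicit matrix (identifying $\mathbb{D} \otimes_k \mathbb{D}^o$ and $\underline{\operatorname{End}}_k(\mathbb{D})$ each as $2|2$-dimensional superalgebras), check by hand that it is invertible over $k$, and then note the same matrix is invertible over any $R$ because its determinant is a unit in $k \subseteq R$; but the base-change argument is cleaner and makes the role of Theorem~2.1 transparent, so that is the one I would write up.
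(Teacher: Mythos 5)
Your proposal is correct, but it takes a genuinely different route from the paper. The paper proves the statement directly over $R$: it writes down the images under $\psi$ of the homogeneous $R$-basis $\{1\otimes 1,\ \theta\otimes\theta \mid 1\otimes\theta,\ \theta\otimes 1\}$ as explicit matrices in $M_{1|1}(R)$ and checks by hand that these four matrices form an $R$-basis of $M_{1|1}(R)$ (this is where the invertibility of $2$ enters), so that $\psi$ carries a basis to a basis and is therefore an isomorphism. This is exactly the ``alternative route'' you mention and set aside. Your base-change argument --- establish $\psi_k$ is an isomorphism via central simplicity of $\mathbb{D}/k$ and the super Artin--Wedderburn theorem, then identify $\psi_R$ with $\psi_k\otimes_k \mathrm{id}_R$ using that internal $\underline{\operatorname{End}}$ of a finitely generated free module commutes with base change --- is sound, and you correctly isolate both the one nontrivial lemma (base change of $\underline{\operatorname{End}}$) and the need to track Koszul signs through the identifications. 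What your approach buys is conceptual clarity and generality: it shows that \emph{any} finite-dimensional central simple superalgebra over $k$ base-changes to a super Azumaya algebra over any commutative $k$-superalgebra, and it makes the role of the hypothesis $\operatorname{char}(k)\neq 2$ visible only through the cited fact that $\mathbb{D}$ is central over $k$. What the paper's computation buys is self-containedness (it does not invoke Theorem~2.1 or the result from Deligne--Morgan that $\mathbb{D}$ is central simple) together with the explicit matrices \eqref{eq: matrixbasis}, which give concrete coordinates for $\psi$ that are in the spirit of the explicit calculations used throughout the rest of the paper. Either write-up is acceptable; if you use yours, do state the base-change lemma for $\underline{\operatorname{End}}$ explicitly, since that is the only step with real content.
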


\begin{proof}
Since $\mathbb{D}_R$ is a free $R$-module, it is certainly faithful, finitely-generated, and projective. $\{1 | \theta\}$ is a homogeneous $R$-basis of $\mathbb{D}_R$, and so $\{1 \otimes 1, \theta \otimes \theta \, | \, 1 \otimes \theta, \theta \otimes 1\}$ is a homogeneous $R$-basis of $\mathbb{D} \otimes_R \mathbb{D}^o$. Using the basis ${1, \theta}$ of $\mathbb{D}$ to identify $\underline{End}_R(\mathbb{D}_R)$ with the matrix superalgebra $M_{1|1}(R)$, one sees that:\\

\begin{align}
\begin{aligned}\label{eq: matrixbasis}
\psi(1 \otimes 1) &= \left( \begin{array}{c|c}
1 & 0\\
\hline
0 & 1
\end{array}
\right) \hspace{5mm}
\psi(\theta \otimes 1) = \left( \begin{array}{c|c}
0 & -1\\
\hline
1 & 0
\end{array}
\right)\\
\psi(1 \otimes \theta) &= \left( \begin{array}{c|c}
0 & 1\\
\hline
1 & 0
\end{array}
\right) \hspace{5mm}
\psi(\theta \otimes \theta) = \left( \begin{array}{c|c}
-1 & 0\\
\hline
0 & 1
\end{array}
\right).
\end{aligned}
\end{align}\\

\noindent It is readily checked that these matrices form an $R$-basis of $M_{1|1}(R)$. (This requires the fact that $2$ is invertible in $R$). Thus $\psi$ sends an $R$-basis of $\mathbb{D} \otimes \mathbb{D}^o$ to one of $M_{1|1, R}$, hence must be an isomorphism.
\end{proof}

%Let $R'$ be a commutative $k$-superalgebra. We then have the following standard base change formula:

%\begin{prop}
%Let $R$ be a supercommutative ring, and let $R'$ be a commutative
%$R$-superalgebra. Then the natural map

%\begin{align*}
%R' \otimes_R \mathbb{D}_R &\to \mathbb{D}_{R'} \\
%s \otimes a &\mapsto s \cdot a
%\end{align*}\

%is an isomorphism of $R'$-superalgebras.\\
%\end{prop}

%\begin{proof}

%\end{proof}

%\noindent {\it Remark.} Of course, an analogous proposition can be proven about $\mathbb{D}^o$.\\

At this point, one ought to generalize our construction to obtain sheaves of super Azumaya algebras, locally isomorphic to $\mathbb{D}_R$. However, we will work in a more restrictive category rather than pursuing this line of development. Let $B/k$ be a superscheme over an algebraically closed field $k$, $char(k) \neq 2$. We define the sheaf $\underline{\mathbb{D}}_B$ by:

\begin{align*}
\underline{\mathbb{D}}_B(U) := \mathbb{D}_k \otimes_k \mathcal{O}_B(U)
\end{align*}

\bigskip

The sheaf $\underline{\mathbb{D}}^o_B$ may be defined in a completely analogous fashion. The following properties of $\underline{\mathbb{D}}_B$ follow from standard arguments and Prop. \ref{prop:superAzumayaalg}:\\

\begin{prop}\label{azumayasheaf}\
\begin{itemize}
\item $\underline{\mathbb{D}}_B$ is a trivial, rank $1|1$ locally free sheaf of $\mathcal{O}_B$-modules.\

\item $\underline{\mathbb{D}}_B$ is a sheaf of super Azumaya algebras over $\mathcal{O}_B$: for any point $b \in |B|$, there exists a Zariski open set $U \ni b$ such that $\phi: \underline{\mathbb{D}}_B(U) \otimes \underline{\mathbb{D}}^o_B(U) \to \underline{End}(\underline{\mathbb{D}}_B(U))$ is an isomorphism.
\end{itemize}
\end{prop}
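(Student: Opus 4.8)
The plan is to deduce both bullet points directly from the definition $\underline{\mathbb{D}}_B(U) := \mathbb{D}_k \otimes_k \mathcal{O}_B(U)$ together with Proposition \ref{prop:superAzumayaalg}. For the first bullet, I would observe that the homogeneous $k$-basis $\{1\,|\,\theta\}$ of $\mathbb{D}$ gives, upon tensoring, a homogeneous $\mathcal{O}_B(U)$-basis $\{1\otimes 1\,|\,\theta\otimes 1\}$ of $\underline{\mathbb{D}}_B(U)$ for every open $U$; since these bases are compatible with the restriction maps of $\mathcal{O}_B$, the sheaf $\underline{\mathbb{D}}_B$ is isomorphic as a sheaf of $\mathcal{O}_B$-modules to $\mathcal{O}_B \oplus \Pi\mathcal{O}_B$, i.e. it is trivial and locally free of rank $1|1$. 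One small point to check is that $\underline{\mathbb{D}}_B$, as I have defined it on a basis of opens, really is a sheaf; but since $\mathbb{D}_k$ is a finite free $k$-module, $-\otimes_k \mathbb{D}_k$ commutes with the limits defining the sheaf axioms, so $\underline{\mathbb{D}}_B = \mathcal{O}_B \otimes_k \mathbb{D}_k$ is a sheaf and in fact a sheaf of superalgebras.

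For the second bullet, the key remark is that the formation of the structure map $\psi$ and of the internal $\underline{End}$ both commute with restriction, and that over any affine open $U = \mathrm{Spec}\,R$ with $R = \mathcal{O}_B(U)$ a supercommutative $k$-algebra, the map $\psi\colon \underline{\mathbb{D}}_B(U)\otimes_R \underline{\mathbb{D}}^o_B(U)\to \underline{End}_R(\underline{\mathbb{D}}_B(U))$ is exactly the map $\psi\colon \mathbb{D}_R\otimes_R \mathbb{D}_R^o \to \underline{End}_R(\mathbb{D}_R)$ of Proposition \ref{prop:superAzumayaalg}, hence an isomorphism. Since $|B|$ has a basis of such affine opens, this establishes the local (indeed, Zariski-local on affines) isomorphism claimed. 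I would also note that one can identify $\underline{End}_{\mathcal{O}_B}(\underline{\mathbb{D}}_B)$ with the sheaf $\underline{M}_{1|1,\mathcal{O}_B}$ of $1|1$ matrices via the basis $\{1\,|\,\theta\}$, and that the explicit matrices \eqref{eq: matrixbasis} are defined over the prime field and therefore globally over $B$, so the isomorphism $\psi$ is in fact globally defined whenever $B$ is affine; localizing is only needed to make sense of the statement for non-affine $B$.

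The only genuine subtlety — and the step I would flag as the ``main obstacle,'' though it is more a matter of bookkeeping than of difficulty — is verifying that tensoring a superalgebra identity over $k$ up to $R$ is compatible with the internal $\underline{End}$ functor: one must check that the canonical map $\underline{End}_k(\mathbb{D})\otimes_k R \to \underline{End}_R(\mathbb{D}\otimes_k R)$ is an isomorphism of superalgebras, which holds because $\mathbb{D}$ is finitely generated projective (indeed free) over $k$. Granting this, the base-changed map $\psi_R$ is the image under $-\otimes_k R$ of the isomorphism $\psi$ over $k$ guaranteed by Theorem 2.1 (or equivalently is the map treated directly in Proposition \ref{prop:superAzumayaalg}), hence an isomorphism, and faithfulness plus finite generation and projectivity of $\underline{\mathbb{D}}_B$ over $\mathcal{O}_B$ are immediate from local freeness of rank $1|1$. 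This completes the verification that $\underline{\mathbb{D}}_B$ is a sheaf of super Azumaya algebras over $\mathcal{O}_B$.
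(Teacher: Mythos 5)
Your proposal is correct and takes essentially the same route the paper intends: the paper itself only remarks that the proposition ``follows from standard arguments and Prop.~\ref{prop:superAzumayaalg},'' and your argument---triviality from the basis $\{1\,|\,\theta\}$, and the Azumaya property on affine opens by applying Prop.~\ref{prop:superAzumayaalg} with $R=\mathcal{O}_B(U)$ together with compatibility of $\underline{End}$ with the finite free base change---is exactly that standard argument spelled out. The paper's one additional remark, that the invertibility of $2$ in $\mathcal{O}_B$ (coming from the $k$-superscheme structure) is needed, is already accounted for in your appeal to Prop.~\ref{prop:superAzumayaalg}.
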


In the proof of \ref{azumayasheaf}, one must use the fact that the structure of $k$-superscheme on $B$ implies that $2$ is invertible in $\mathcal{O}_B$. One can prove an analogue of Prop. \ref{azumayasheaf} for $\underline{\mathbb{D}}^o_B$ in the same way.

More generally, we will consider the category of relative superschemes $X/B$ for $B$ a Noetherian $k$-superscheme. We define a sheaf of super Azumaya algebras ${\mathbb{D}}_{X/B}$ on $\pi:X \to B$ by:

\begin{align*}
\underline{\mathbb{D}}_{X/B}\ := \pi^*(\underline{\mathbb{D}}_B)
\end{align*}

\bigskip

The relative version of Prop. \ref{azumayasheaf} holds in the category of $B$-superschemes:

\begin{prop}\label{prop:relativeazumayasheaf}\
\begin{itemize}
\item $\underline{\mathbb{D}}_{X/B}$ is a trivial rank $1|1$ locally free sheaf of $\mathcal{O}_X$-modules.\

\item $\underline{\mathbb{D}}_{X/B}$ is a sheaf of super Azumaya algebras over $\mathcal{O}_X$: for any point $x \in |X|$, there exists a Zariski open set $U \ni x$ such that $\phi: \underline{\mathbb{D}}_{X/B}(U) \otimes \underline{\mathbb{D}}^o_{X/B}(U) \to \underline{End}(\underline{\mathbb{D}}_{X/B}(U))$ is an isomorphism.
\end{itemize}
\end{prop}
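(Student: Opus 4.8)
The plan is to reduce everything to the absolute case, Proposition~\ref{azumayasheaf}, and to Proposition~\ref{prop:superAzumayaalg}, via base change along $\pi : X \to B$. First I would observe that, since $\mathbb{D}_k$ is a finite free $k$-module, forming $\pi^{-1}$ and $\pi^{*}$ commutes with tensoring by the constant factor $\mathbb{D}_k$: one has $\pi^{-1}(\mathbb{D}_k \otimes_k \mathcal{O}_B) = \mathbb{D}_k \otimes_k \pi^{-1}\mathcal{O}_B$, whence
\[
\underline{\mathbb{D}}_{X/B} \;=\; \pi^{*}\underline{\mathbb{D}}_B \;=\; \mathcal{O}_X \otimes_{\pi^{-1}\mathcal{O}_B}\bigl(\mathbb{D}_k \otimes_k \pi^{-1}\mathcal{O}_B\bigr) \;\cong\; \mathbb{D}_k \otimes_k \mathcal{O}_X .
\]
In particular $\{1\,|\,\theta\}$ is a global homogeneous $\mathcal{O}_X$-basis, which is exactly the first bullet: $\underline{\mathbb{D}}_{X/B}$ is a trivial locally free sheaf of rank $1|1$. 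The same computation applies verbatim to $\underline{\mathbb{D}}^o_{X/B}$.

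For the second bullet I would argue locally. Since $X$ is in particular a $k$-superscheme with $char(k) \neq 2$, the element $2$ is invertible in every ring of sections $\mathcal{O}_X(U)$. Choose an affine open $U = \mathrm{Spec}\,A \subseteq X$; by the identification above, $\underline{\mathbb{D}}_{X/B}(U) = \mathbb{D}_k \otimes_k A = \mathbb{D}_A$, and Proposition~\ref{prop:superAzumayaalg} applied with $R = A$ asserts precisely that $\psi_A : \mathbb{D}_A \otimes_A \mathbb{D}_A^o \to \underline{End}_A(\mathbb{D}_A)$ is an isomorphism. The point that requires verification is that the sheaf morphism $\psi$ of the statement — the one $\underline{\mathbb{D}}_{X/B} \otimes_{\mathcal{O}_X} \underline{\mathbb{D}}^o_{X/B} \to \underline{End}_{\mathcal{O}_X}(\underline{\mathbb{D}}_{X/B})$ induced by $a \otimes b \mapsto (x \mapsto (-1)^{|b||x|}axb)$ — restricts over $U$ to $\psi_A$ under the natural identifications of sections. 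This is a routine naturality check, using that $\underline{\mathbb{D}}_{X/B}$ is finitely presented (in fact free), so that forming $\otimes$ and $\underline{End}$ commutes with restriction to opens. Granting it, $\psi$ is an isomorphism on sections over a basis of affine opens, hence an isomorphism of sheaves, and $\underline{\mathbb{D}}_{X/B}$ is a sheaf of super Azumaya algebras over $\mathcal{O}_X$.

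The content is entirely formal; the only place where a little care is needed is the interaction of $\pi^{*}$ with the internal $\underline{End}$ and with $\otimes$. Pullback is symmetric monoidal, so $\pi^{*}(\underline{\mathbb{D}}_B \otimes_{\mathcal{O}_B} \underline{\mathbb{D}}^o_B) \cong \underline{\mathbb{D}}_{X/B} \otimes_{\mathcal{O}_X} \underline{\mathbb{D}}^o_{X/B}$; but pullback does not commute with internal $\underline{Hom}$ in general, so one uses here that $\underline{\mathbb{D}}_B$ is locally free of finite rank to get $\pi^{*}\underline{End}_{\mathcal{O}_B}(\underline{\mathbb{D}}_B) \cong \underline{End}_{\mathcal{O}_X}(\pi^{*}\underline{\mathbb{D}}_B)$, and then that under these identifications $\pi^{*}\psi_B$ is the morphism $\psi$ of the statement. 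I expect this bookkeeping — matching the globally defined $\psi$ with the pullback of the absolute $\psi$ and with the local $\psi_A$'s — to be the only mildly delicate point; once it is in place the result follows from Proposition~\ref{azumayasheaf} (equivalently, directly from Proposition~\ref{prop:superAzumayaalg}), since the pullback of an isomorphism is an isomorphism.
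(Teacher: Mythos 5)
Your proposal is correct and follows essentially the same route as the paper: both parts reduce to the fact that pullback preserves trivial locally free sheaves and to the matrix computation of Proposition~\ref{prop:superAzumayaalg}. The only cosmetic difference is that you identify $\underline{\mathbb{D}}_{X/B}$ with $\mathbb{D}_k \otimes_k \mathcal{O}_X$ and apply Proposition~\ref{prop:superAzumayaalg} directly with $R = \mathcal{O}_X(U)$ over the base field $k$, whereas the paper routes through the intermediate base $\mathcal{O}_B(V)$ (replacing $k$ by $\mathcal{O}_B(V)$ and $R$ by $\mathcal{O}_X(U)$), which is why it needs the extra remark about direct limits of the $\underline{\mathbb{D}}_B(W)$.
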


\begin{proof}
The first statement follows from the fact that the inverse image of a trivial locally free sheaf of $\mathcal{O}_B$-modules is also locally free over $\mathcal{O}_X$ of the same rank and trivial. 

For the second, note that for any $x$ there is an open set $U \subseteq X$, $x \in U$, such that $f(U)$ is contained in an open set $V$ in $B$ and $\mathbb{D}_B(V)$ is a super Azumaya algebra with $\mathcal{O}_B(V)$-basis $1, \theta$. Then the proof of the claim is completely analogous to Prop. \ref{prop:superAzumayaalg}, replacing $k$ with $\mathcal{O}_B(V)$ and $R$ with $\mathcal{O}_X(U)$. The only point to note is that the direct limit of $\underline{\mathbb{D}}_B(W)$ over all open $W$ containing $f(U)$ is a super Azumaya algebra because $\underline{\mathbb{D}}_B(W)$ is super Azumaya for any open $W \subseteq V$.

\end{proof}

In the future, when working in the relative category of $X/B$, we will occasionally abuse notation and use $\underline{\mathbb{D}}_X$ to denote $\mathbb{D}_{X/B}$.\\

\noindent {\it Remark.} 1). It would be interesting to extend this theory to those cases where $2$ is not invertible.\\

\noindent 2). $\underline{\mathbb{D}}_B$ is not the only possible sheaf of super Azumaya algebras on $B$ which is locally isomorphic to $\mathbb{D} \otimes_k \mathcal{O}_B$; for instance, one could tensor $\mathbb{D}_B$ with any locally free sheaf of rank $1|0$ on $B$.

Here the structure morphism $B \to Spec(k)$ allows us to pull back $\mathbb{D}_k$ to $B$ in a canonical fashion, giving us a natural sheaf of super Azumaya algebras $\underline{\mathbb{D}}_B$ on $B$, and the structure morphism $f:X \to B$ then gives a canonical pullback of $\mathbb{D}_B$ to $X/B$.

$\mathbb{D}_k$ is canonical as well, in the following sense: the Brauer equivalence class of $\mathbb{D}_k$ is the generator of the super Brauer group $sBr(k)$ of $k$, which is isomorphic to $\mathbb{Z}_2$, (see section of \cite{DM}). Thus $\mathbb{D}_{X/B}$ is the most ``natural" way of extending $\mathbb{D}_k$ to a sheaf of super Azumaya algebras on $X/B$, in the sense that it involves no arbitrary choices, only the given structure morphisms.

Presumably a more complete understanding of the situation would entail developing the theory of the super Brauer group of a superscheme along the lines of \cite{Gro}. We speculate that those Brauer equivalence classes in the super Brauer group of $B$ which are represented by sheaves of super Azumaya algebras locally isomorphic to $\underline{\mathbb{D}}_B$ each correspond to fundamentally different ``twisted" versions of $\Pi$-projective geometries over $B$. What we treat in this paper might justifiably be called the ``untwisted" $\Pi$-projective geometry.\\

\subsection{$\mathbb{D}$-modules.}

In this section we shall discuss the theory of $\mathbb{D}_R$-modules. Since $\mathbb{D}$ is noncommutative, we must take care to maintain the distinction between left and right $\mathbb{D}$-modules. Often we denote $\mathbb{D}_R$ by $\mathbb{D}$ to save notation; the omission of the base ring should not cause any confusion.

\begin{defn}
A {\it left } (resp. {\it right) $\mathbb{D}_R$-module} is an $R$-module $M$ with an $R$-algebra homomorphism (resp. antihomomorphism) $\mathbb{D} \to \underline{End}_R(M)$. 
\end{defn}

It is readily seen that a left $\mathbb{D}$-action on $M$ is completely equivalent to the choice of an odd $R$-endomorphism $\phi$ of $M$ such that $\phi^2 = -1$. Namely, suppose given a left action of $\mathbb{D}$ on $M$; then the left action of $\theta$ on $M$ is an odd $R$-endomorphism whose square is $-1$. Conversely, given an odd $R$-endomorphism $\phi$ such that $\phi^2 = -1$, a left action of $\mathbb{D}$ on $M$ is given by:

\begin{equation*}
(a + b\theta) \cdot v = av + b\phi(v)
\end{equation*}\

In order to comply with our convention that endomorphisms of modules act on the left, we often convert a right $\mathbb{D}$-module into a left $\mathbb{D}^o$-module via the usual formula:

\begin{equation*}
s \cdot m := (-1)^{|s||m|} m \cdot s,
\end{equation*}\

where $\cdot$ denotes the $\mathbb{D}$-module action on the right and the $\mathbb{D}^o$-module action on the left.

 %In other words, we define a right $\mathbb{D}$-action as a homomorphism $\mathbb{D}^o \to \underline{End}(M)$ rather than an antihomomorphism $\mathbb{D} \to \underline{End}(M)$.% 

From this point of view, a right $\mathbb{D}$-action on an $R$-module $M$ is equivalent to specifying an odd $R$-endomorphism $\phi'$ of $M$ such that $(\phi')^2 = 1$. 

\begin{comment}The right $\mathbb{D}$-action is recovered from this datum by defining:

\begin{equation*}
m \cdot (a + \theta b) = m \cdot a + (-1)^{|m|} \phi'(m) \cdot b.
\end{equation*}

\bigskip

\end{comment}

\begin{defn}
A {\it homomorphism} $f: M \to N$ of left (right) $\mathbb{D}$-modules is a homomorphism of $R$-modules that intertwines the actions of $\mathbb{D}$ on $M$ and on $N$. $f$ is a {\it morphism} if $f$ preserves parity.
\end{defn}

Now that we have a notion of morphism, we have categories ${}_\mathbb{D} \mathfrak{M}$ (resp. $\mathfrak{M}_\mathbb{D}$ of left (resp. right) $\mathbb{D}$-modules, as well as the category ${}_\mathbb{D} \mathfrak{M}_\mathbb{D}$ of $\mathbb{D}$-bimodules. We can also define the categorical $Hom$ (i.e., parity-preserving homomorphisms) and internal $\underline{Hom}$ (all homomorphisms) in these categories, as one can for module categories over any associative super ring.

In particular, for a left (right) $\mathbb{D}$-module $M$, the $\mathbb{D}$-dual $M^\vee := \underline{Hom}_\mathbb{D}(M, \mathbb{D})$ is well-defined. $M^\vee$ is a left (right) $\mathbb{D}$-module in the usual way, by (left) right multiplication in $\mathbb{D}$.

One can also define a {\it free} $\mathbb{D}$-module on an {\it ungraded} basis set $I$, via the usual universal property. If $I$ is a finite set, the {\it rank} of the free $\mathbb{D}$-module on $I$ is defined to be $|I|$. That the rank is well-defined follows from the fact that a free $\mathbb{D}$-module of rank $n$ is also a free $R$-module of rank $n|n$ and that a supercommutative ring $R$ satisfies the invariant basis number property.

We note that the superrank of a free $\mathbb{D}$-module is not a well-defined notion: for instance, $\mathbb{D}$, regarded as an (e.g. left) $\mathbb{D}$-module, has even basis $\{1\}$ or odd basis $\{\theta\}$. This is a consequence of the noncommutativity of $\mathbb{D}$.

Owing to this noncommutativity, the theory of $\mathbb{D}$-modules is quite involved. However, we have the following extremely important special case, which later serves as a model for $\Pi$-invertible sheaves:

\begin{prop}\label{prop:canonicalbasis}
Let $k$ be an algebraically closed field, $char(k) \neq 2$, $R$ a commutative $k$-superalgebra, and $M$ a right $\mathbb{D}_R$-module, free of rank $1|1$ over $R$. Then $M \cong \mathbb{D}_R$ in $\mathfrak{M}_{\mathbb{D}_R}$.
\end{prop}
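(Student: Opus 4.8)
The plan is to produce an explicit isomorphism $\mathbb{D}_R \to M$ of right $\mathbb{D}_R$-modules by choosing a suitable homogeneous element $m \in M$ and sending $1 \mapsto m$; the content is in finding an $m$ such that the induced map is bijective. Since $M$ is free of rank $1|1$ over $R$, fix a homogeneous $R$-basis $\{e_0 \mid e_1\}$ with $e_0$ even, $e_1$ odd. The right $\mathbb{D}_R$-action is encoded (after converting to a left $\mathbb{D}^o_R$-action, as in the discussion preceding the proposition) by an odd $R$-endomorphism $\phi$ of $M$ with $\phi^2 = 1$. Write the matrix of $\phi$ in the basis $\{e_0\mid e_1\}$ as $\left(\begin{smallmatrix} 0 & b \\ c & 0\end{smallmatrix}\right)$ with $b, c \in R_0$ (the off-diagonal entries are even since $\phi$ is odd), so that $\phi(e_0) = c\, e_1$, $\phi(e_1) = b\, e_0$, and $\phi^2 = 1$ forces $bc = cb = 1$; in particular $b$ and $c = b^{-1}$ are units in $R_0$.

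First I would use this to normalize: the map $\mathbb{D}_R \to M$, $1 \mapsto e_0$, sends $\theta \mapsto e_0 \cdot \theta$. Under the dictionary $s\cdot m = (-1)^{|s||m|} m\cdot s$ and $\phi = $ (left action of $\theta$ on the associated $\mathbb{D}^o$-module), one computes $e_0 \cdot \theta = \pm\phi(e_0) = \pm c\, e_1$, which is a unit multiple of $e_1$. Hence the images of $1$ and $\theta$ under this map are $e_0$ and a unit times $e_1$, so together they form an $R$-basis of $M$; therefore the $R$-linear map $\mathbb{D}_R \to M$ it defines is an isomorphism of $R$-modules. Being also $\mathbb{D}_R$-linear by construction (it is the unique right-module map with $1 \mapsto e_0$), it is an isomorphism in $\mathfrak{M}_{\mathbb{D}_R}$.

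The main point to get right — and the only place anything can go wrong — is the bookkeeping with signs and the left/right conversion: one must check carefully that $\phi^2 = 1$ (rather than $-1$) is indeed the correct condition for a \emph{right} $\mathbb{D}_R$-module, as asserted in the text, and that the element $e_0\cdot\theta$ really is a unit multiple of $e_1$ and not, say, killed by some nilpotent. The computation $bc = 1$ is what rules out the latter: it is forced purely by $\phi^2 = 1$ together with the fact that $\phi$ is odd, so the off-diagonal entries are the only data. Once $b \in R_0^\times$ is established, no further invertibility hypotheses on $R$ are needed beyond $2 \in R^\times$ (used implicitly via Prop.~\ref{prop:superAzumayaalg} in identifying the action). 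I would close by remarking that this gives $M$ a canonical \emph{right} $\mathbb{D}_R$-module generator up to the $\mathbb{D}^*_R$-action, which is exactly the local model underlying the definition of $\Pi$-invertible sheaves in the sequel.
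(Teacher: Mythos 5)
Your overall strategy---send $1 \mapsto e_0$ for an even basis vector $e_0$ and check that the images of $1$ and $\theta$ form an $R$-basis of $M$---is sound and is really just a repackaging of the paper's argument, which instead conjugates the matrix of $\phi$ into the standard form $\left(\begin{smallmatrix}0&1\\1&0\end{smallmatrix}\right)$. But there is a genuine error in your setup: you assert that the matrix of the odd endomorphism $\phi$ in a homogeneous basis is $\left(\begin{smallmatrix}0&b\\c&0\end{smallmatrix}\right)$ with vanishing diagonal. That is only true when $R_1 = 0$. For a general commutative $k$-superalgebra $R$, an odd endomorphism of a free $1|1$ module has \emph{odd} diagonal entries, which need not vanish; the condition $\phi^2 = 1$ forces the matrix to have the form
\begin{equation*}
\left(\begin{array}{c|c} \alpha & a \\ \hline a^{-1} & -\alpha \end{array}\right), \qquad a \in R_0^\times,\ \alpha \in R_1,
\end{equation*}
which is exactly what the paper records before conjugating. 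Consequently your claim that $e_0\cdot\theta$ is ``a unit multiple of $e_1$'' is false in general: it is (up to sign) $\alpha e_0 + a^{-1} e_1$. Since the super setting exists precisely to handle nonzero odd part, this is not a harmless omission.

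The good news is that the gap is a one-line fix rather than a fatal flaw: the pair $\{e_0,\ \pm(\alpha e_0 + a^{-1}e_1)\}$ is still an $R$-basis, because the change-of-basis matrix $\left(\begin{smallmatrix}1 & \pm\alpha\\ 0 & \pm a^{-1}\end{smallmatrix}\right)$ is triangular with invertible diagonal entries. So the map $\mathbb{D}_R \to M$, $1 \mapsto e_0$, is still an isomorphism of right $\mathbb{D}_R$-modules; you just cannot conclude this from the (incorrect) claim that $e_0\cdot\theta$ is proportional to $e_1$. The invertibility of the off-diagonal entry, which you correctly identify as the crux, does survive the correction: $\phi^2=1$ still forces the two off-diagonal entries to be mutually inverse units. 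With that repair your proof and the paper's are essentially the same computation organized differently.
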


\begin{proof}
By the previous discussion, $M$ is an $R$-module with an odd endomorphism $\phi$, $\phi^2 = 1$. Choose a basis for $M$ as an $R$-module. It is easily seen that $\phi^2 = 1$ if and only if the matrix representing $\phi$ in this basis has the form

\begin{equation*}
P :=  \left(\begin{array}{c|c}
\alpha & a \\
\hline
a^{-1} & -\alpha \\
\end{array}\right)
\end{equation*}

\bigskip

\noindent in this basis, with $a, \alpha \in R$. We conjugate $P$ by the invertible matrix:

\begin{equation*}
B :=  \left(\begin{array}{c|c}
a^{-1} & -\alpha \\
\hline
0 & 1 \\
\end{array}\right)
\end{equation*}

\bigskip

\noindent obtaining:

\begin{align*}
P' := BPB^{-1} &=
 \left(\begin{array}{c|c}
a^{-1} & -\alpha \\
\hline
0 & 1 \\
\end{array}\right)
 \left(\begin{array}{c|c}
\alpha & a \\
\hline
a^{-1} & -\alpha \\
\end{array}\right)
 \left(\begin{array}{c|c}
a & a \alpha \\
\hline
0 & 1\\
\end{array}\right)\\
&= \left(\begin{array}{c|c}
0 & 1 \\
\hline
1 & 0 \\
\end{array}\right)
\end{align*}

\bigskip

Applying the change of basis matrix $B$ to our original basis, we obtain a basis $\{e | f \}$ such that $\phi(e) = f, \phi(f) = e$, hence a right $\mathbb{D}$-module isomorphism $\mathbb{D} \to M$.

\end{proof}

\noindent {\it Remark.} As previously, one can prove a completely analogous proposition for a left $\mathbb{D}$-
module, free of rank $1|1$ over $R$.

\subsection{\bf $\mathbb{D}^*$ and the group superscheme $\mathbb{G}^{1|1}_m$.}

For now, let us work in the category of $B$-superschemes, where $B$ is an arbitrary superscheme. Following \cite{ma1}, we define the group superscheme $\mathbb{G}^{1|1}_m$ over $B$, whose functor of points is given by:

\begin{align*}
T \mapsto [\Gamma(\mathcal{O}_T)]^*
\end{align*}

\medskip

\noindent for any $B$-superscheme $T$. Hence $\mathbb{G}^{1|1}_m(T)$ consists of all global sections $a + \alpha$ of $\mathcal{O}_T$, where $a$ is even and invertible, $\alpha$ odd. Similarly, we may define a sheaf of groups $\underline{\mathbb{G}^{1|1}_m}$, whose sections on an open set $U \subset X$ are given by:

\begin{align*}
\underline{\mathbb{G}^{1|1}_m}(U) =  [\mathcal{O}_X(U)]^*
\end{align*} 

\medskip

\noindent This is evidently just the sheaf of groups $\mathcal{O}^*_X$.

The following proposition (for the analytic category) is from \cite{ma1}:

\begin{prop}
The functor $\mathbb{G}^{1|1}_m: (Superschemes/B) \to (Groups)$ is represented by
the affine $B$-superscheme $\mathbb{A}^{1|1}_B \backslash \{0\}$, with group law given in terms of the functor of points by:

\begin{align*}
(a, \alpha) \cdot (a', \alpha') := (aa' + \alpha \alpha', a \alpha' + a' \alpha)
\end{align*}
\end{prop}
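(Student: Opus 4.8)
The plan is to exhibit an explicit isomorphism of functors between $\mathbb{G}^{1|1}_m$ and the functor of points of the open subscheme $\mathbb{A}^{1|1}_B \setminus \{0\}$, and then to check that the group law transported across this isomorphism is the one displayed. First I would recall that $\mathbb{A}^{1|1}_B$ is $\operatorname{Spec}_B \mathcal{O}_B[x,\xi]$ with $x$ even and $\xi$ odd, so that for a $B$-superscheme $T$ one has $\mathbb{A}^{1|1}_B(T) = \{(a,\alpha) : a \in \Gamma(\mathcal{O}_T)_0,\ \alpha \in \Gamma(\mathcal{O}_T)_1\}$ via $f \mapsto (f^\sharp x, f^\sharp \xi)$. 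The closed subscheme $\{0\}$ is cut out by the ideal $(x,\xi)$; on $T$-points, the open complement $\mathbb{A}^{1|1}_B \setminus \{0\}$ consists of those pairs $(a,\alpha)$ for which the ideal generated by $a$ and $\alpha$ is the unit ideal in $\Gamma(\mathcal{O}_T)$ — and since $\alpha$ is nilpotent, $(a,\alpha) = (1)$ forces $a$ to be invertible in $\Gamma(\mathcal{O}_T)$. Conversely if $a$ is invertible then $(a,\alpha) = (1)$. Hence the $T$-points of $\mathbb{A}^{1|1}_B \setminus \{0\}$ are exactly the pairs $(a,\alpha)$ with $a$ even invertible and $\alpha$ odd, which are precisely the elements $a + \alpha \in [\Gamma(\mathcal{O}_T)]^*$ of the multiplicative group, since $a + \alpha$ is a unit iff its even part $a$ is a unit. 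This gives a natural bijection $\mathbb{G}^{1|1}_m(T) \xrightarrow{\sim} (\mathbb{A}^{1|1}_B \setminus \{0\})(T)$, functorial in $T$, establishing representability.

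Next I would transport the group law. An element of $[\Gamma(\mathcal{O}_T)]^*$ written as $a + \alpha$ multiplies by $(a + \alpha)(a' + \alpha') = aa' + a\alpha' + a'\alpha + \alpha\alpha'$; collecting the even part $aa' + \alpha\alpha'$ and the odd part $a\alpha' + a'\alpha$ and matching with the identification above gives exactly $(a,\alpha)\cdot(a',\alpha') = (aa' + \alpha\alpha', a\alpha' + a'\alpha)$, as asserted. To see this is the group law of a $B$-group scheme structure on $\mathbb{A}^{1|1}_B \setminus \{0\}$, rather than merely a functorial group law, I would note that multiplication is given by the morphism of $\mathcal{O}_B$-superschemes dual to the Hopf-type coalgebra map $x \mapsto x \otimes x + \xi \otimes \xi$, $\xi \mapsto x \otimes \xi + \xi \otimes x$ on the localized ring $\mathcal{O}_B[x,\xi]_x = \mathcal{O}_B[x,x^{-1},\xi]$, with counit $x \mapsto 1, \xi \mapsto 0$ and antipode $x \mapsto x^{-1}, \xi \mapsto -x^{-2}\xi$ (the inverse of $a + \alpha$ being $a^{-1} - a^{-2}\alpha$, consistent with $(a+\alpha)(a^{-1}-a^{-2}\alpha) = 1$); alternatively one simply invokes Yoneda, since a functorial group structure on a representable functor automatically descends to a group object structure on the representing object.

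The main obstacle — really the only subtle point — is the identification of the $T$-points of the \emph{open} subscheme $\mathbb{A}^{1|1}_B \setminus \{0\}$: one must argue carefully that a $T$-point of $\mathbb{A}^{1|1}_B$ factors through the open complement of the origin precisely when $a$ is invertible, which uses that the image of the closed point $\{0\}$ must be avoided scheme-theoretically and the nilpotence of odd elements to reduce the condition "$(a,\alpha)$ generate the unit ideal" to "$a$ is a unit." I would give this argument in the generality of an arbitrary base $B$, noting that it suffices to check on affine opens of $B$ and $T$. Everything else is a direct computation with the functor of points, so the proof should be short once this point is settled.
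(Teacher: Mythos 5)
Your proposal is correct and follows the same route as the paper: identify the $T$-points of $\mathbb{A}^{1|1}_B \setminus \{0\}$ with pairs $(a,\alpha)$, $a$ even invertible and $\alpha$ odd, send $(a,\alpha) \mapsto a+\alpha$, and match the group laws. The only difference is that you supply the details the paper dismisses as ``well known'' and ``readily checked'' (the unit-ideal/nilpotence argument for the open complement and the Hopf-algebra/Yoneda remark), and these details are accurate.
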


\begin{proof}
It is well known that the functor of points of $\mathbb{A}^{1|1}_B \backslash \{0\}$ is

\begin{align*}
\underline{\mathbb{A}^{1|1}_B \backslash \{0\}}(T) = \{(a, \alpha) : a, \alpha \in \Gamma(\mathcal{O}_T), \; a \text{ even and invertible, } \alpha \text{ odd} \}.
\end{align*}

\medskip

\noindent for $T$ a $B$-superscheme. $(a, \alpha) \mapsto a + \alpha$ is the desired isomorphism between the functor of points of $\mathbb{A}^{1|1}_B \backslash \{0\}$ and the functor $\mathbb{G}^{1|1}_m$. One checks readily that this isomorphism preserves the group laws.
\end{proof}

Occasionally it will be convenient to imbed $\mathbb{G}^{1|1}_m$ into $SL(1|1, \mathcal{O}_B)$ as a closed subsupergroup via:

\begin{align*}
a + \alpha \mapsto \left( \begin{array}{c|c}
a & \alpha \\
\hline
\alpha & a
\end{array}
\right).
\end{align*}

\bigskip

\noindent It is straightforward to check that the Berezinian of an element of this subsupergroup is $1$. This imbedding is also valid for the sheaf $\underline{\mathbb{G}}^{1|1}_m$ into $\underline{SL}(1|1)$.

Now we make the further assumption that $B$ is a superscheme over an algebraically closed field $k$, $char(k) \neq 2$. Then the sheaf $\underline{\mathbb{D}}_B$ of super Azumaya algebras naturally gives rise to a group superscheme $\mathbb{D}^*_B$ over $B$, via the functor of points:

\begin{align*}
T \mapsto [\Gamma(T, f^*\underline{\mathbb{D}}_B)]^*_0
\end{align*}

\medskip

\noindent for any $B$-superscheme $f: T \to B$. When the base $B$ is understood, we will sometimes write $\mathbb{D}^*$ for $\mathbb{D}^*_B$. Similarly, for a fixed $B$-superscheme $f:X \to B$ we have a sheaf of groups $\underline{\mathbb{D}}^*_X$ on $X$, defined by:

\begin{align*}
\underline{\mathbb{D}}^*_X(U) := [f^*(\underline{\mathbb{D}}_B)(U)]^*_0.
\end{align*}

\medskip

Deligne has pointed out that there is a natural isomorphism $\mathbb{G}^{1|1}_m \to \mathbb{D}^*$, given on the level of $T$-points
by:

\begin{align*}
a + \alpha \mapsto a + \theta \alpha
\end{align*}

\medskip

\noindent where $a, \alpha \in \mathcal{O}_T$, $a$ even and invertible, $\alpha$ odd. We may identify the sheaf of groups $\underline{\mathbb{G}}^{1|1}_m = \mathcal{O}^*_X$ with $\underline{\mathbb{D}}^*_X$ in the same fashion.

\section{Construction of Manin's $\Pi$-projective space $\mathbb{P}_\Pi(V)$}

We work in the category of $B$-superschemes, $B$ a superscheme over an algebraically closed field $k$, $char(k) \neq 2$.

To motivate our construction, let us take $B = Spec(k)$, and consider the $\mathbb{D}$-bimodule $V = \mathbb{D}^n$. Then the action of $\mathbb{D}^*$ on $V$ by {\it left} scalar multiplication is an automorphism of the {\it right} $\mathbb{D}$-module $V_\mathbb{D}$. In particular, it preserves the rank $1$ right $\mathbb{D}$-submodules of $V$. Hence for any rank $1$ right $\mathbb{D}$-submodule $W \subset V$, the restriction of left scalar multiplication by $\mathbb{D}^*$ to $W$ makes sense and naturally identifies $\mathbb{D}^*$ with the (super)group $Aut_{\mathbb{D}}(W_{\mathbb{D}})$. Since any nonzero even element $v$ of $V$ generates a rank $1$ right $\mathbb{D}$-submodule of $V$ and $v, v'$ generate the same submodule iff $v' = gv$ for some $g \in \mathbb{D}^*$, the ``quotient" of $V \backslash \{0\}$ by the left action of $\mathbb{D}^*$ should be identified with the space of right $\mathbb{D}$-lines (i.e., rank $1$ $\mathbb{D}$-submodules of $V_\mathbb{D}$). We shall make this precise for any $B$ by first constructing the quotient $V \backslash \{0\} / \mathbb{D}^*$, then characterizing morphisms into this quotient. 

It would be very interesting to develop an analogue of the $Proj$ construction for the category of $\mathbb{D}$-modules in order to extend these results to the case of arbitrary $k$-superschemes $B$ and obtain an invariant way of producing relative $\Pi$-projective spaces over arbitrary $B$-superschemes. We plan to address this topic in future work.

Let $(E, \psi, \phi)$ be a locally free sheaf of $\underline{\mathbb{D}}_B$-bimodules on $B$ of rank $n+1$ (here $\psi$ yields the left $\mathbb{D}$-action, $\phi$ the right $\mathbb{D}$-action). 

$E$ is, in a natural way, a locally free sheaf of $\mathcal{O}_B$-modules of rank $n+1|n+1$. To this sheaf is naturally associated the family of relative affine superspaces $\mathbb{A}(E)_B$ over $B$, given by

\begin{equation*}
\mathbb{A}(E)_B = {\bf Spec}(\underline{Sym}_{\mathcal{O}_B}(E^*))
\end{equation*}

\medskip

\noindent Here ${\bf Spec}$ denotes global super $Spec$ over $B$. We will denote this affine $B$-superscheme by $\underline{E}$ to distinguish it from the sheaf $E$; the structure morphism will be denoted by $\rho: \underline{E} \to B$. It is well known that $\underline{E}$ represents the functor of points:

\begin{align*}
(Superschemes/B) &\to (Sets)\\
(f: T \to B) &\to \Gamma(T, f^*(E))_0.
\end{align*}

\medskip

We shall give a construction of the Manin $\Pi$-projective bundle $\mathbb{P}_\Pi(E, \psi, \phi)$ over $B$ as the quotient of $\underline{E} \backslash \{0\}$ by the action of the group superscheme $\mathbb{D}^*_B$.

The right $\underline{\mathbb{D}}_B$-action on $E$ by scalar multiplication is given by:

\begin{equation}\label{eq:actionformula}
v \cdot (a + \theta \alpha) := v a + (-1)^{|v|} \phi(v) \alpha
\end{equation}

\medskip

\noindent for a homogeneous element $a + \theta \alpha$ of $\underline{\mathbb{D}}_B$.

We now turn to the left action of $\mathbb{D}^*_B$ on the {\it superscheme} $\underline{E}$ induced by the left action of $\mathbb{D}_B$. It will be more convenient for us to convert it into a right action by the standard device of composing with the inversion antihomomorphism:

\begin{align}
v \cdot (t + \theta \tau) :=  (t + \theta \tau)^{-1} v
\end{align}

\medskip

\noindent \noindent where this equation is now interpreted in terms of the functor of points. This right $\mathbb{D}^*_B$-action induces a left $\mathbb{D}^*_B$-action on $\mathcal{O}_{\underline{E}}$. We emphasize that the right action of $\mathbb{D}^*_B$ so defined is completely distinct from the previously defined right action of $\underline{\mathbb{D}}_B$; indeed, the two actions commute.

The zero-section of $E$ embeds $B$ canonically into $\underline{E}$ as a closed subsuperscheme; hence the complement of the image of the zero-section $\underline{E} \backslash \{0\}$ is an open $B$-subsuperscheme of $\underline{E}$. The actions of $\mathbb{D}^*$ and of $\underline{\mathbb{D}}$ are linear, thus restrict to $\underline{E} \backslash \{0\}$.

We have the reduction morphisms $\mathbb{G}_m \to \mathbb{D}^*$, $\underline{E}\backslash\{0\}
\to \overline{\underline{E}}\backslash\{0\})$, which are the identity on the underlying topological spaces. Let $\pi: |\underline{\overline{E}}\backslash\{0\}| \to |\mathbb{P}_{\overline{B}}(\overline{E})|$ be the map on the underlying spaces induced by the quotient morphism $\underline{\overline{E}} \backslash \{0\} \to \mathbb{P}_{\overline{B}}(\overline{V})$. Recall that $|\underline{V}\backslash\{0\}| = |\underline{\overline{E}}\backslash\{0\}|$.

We will define $\mathbb{P}_{\Pi, B}(E)$ as follows. Its underlying topological space will be the underlying space $|\mathbb{P}_{\overline{B}}(\overline{E})|$ of the $\overline{B}$-projective space, $\mathbb{P}_{\overline{B}}(\overline{E})$. We will construct a sheaf of $\mathcal{O}_B$-superalgebras $\mathcal{O}_{\mathbb{P}_{\Pi, B}(E)}$ on $|\mathbb{P}_{\overline{B}}(\overline{E})|$ such that $(\mathbb{P}(E_{red}), \mathcal{O}_{\mathbb{P}_{\Pi, B}(E)})$ is a $B$-superscheme. 

Let $U \subseteq \mathbb{P}_{\overline{B}}(\overline{E})$ be any Zariski open set. Then $U' :=
\pi^{-1}(U)$ is an open subset of $|\overline{\underline{E}}\backslash\{0\}| = |\underline{E}\backslash\{0\}| $. The open subscheme $(U', \mathcal{O}_{\overline{\underline{E}} \backslash \{0\}}|_{U'})$ of $\overline{\underline{E}} \backslash \{0\}$ is well-known to be $\mathbb{G}_m$-invariant. This implies that $\mathbb{D}^*_B$ acts on the $B$-superscheme $\underline{U}':= (U', \mathcal{O}_{\underline{E}\backslash\{0\}}|_{U'}) \subseteq \underline{E} \backslash\{0\}$, since the restriction $a|_{\mathbb{D}^* \times_B \underline{U}'}$ to $\mathbb{D}^* \times_B \underline{U}'$ of the action morphism $a: \mathbb{D}^* \times_B \underline{E} \backslash \{0\} \to \underline{E} \backslash \{0\}$ maps into $U'$.

\begin{defn}
Let $X$ be a $B$-superscheme, $G$ a group superscheme over $B$, and $a \text{ (resp. } p_2): G \times_B X \to X$ an action of $G$ on $X$ (resp. the projection on the second factor). A function $f \in \mathcal{O}_X$ is {\it $G$-invariant} if and only if $a^*(f) = p_2^*(f)$.
\end{defn}

\medskip

We may now define the sheaf $\mathcal{O}_{\mathbb{P}_\Pi(E)}$ by:

\begin{align}
\mathcal{O}_{\mathbb{P}_\Pi(E)} (U) := \mathcal{O}^{\mathbb{D}^*}_{\underline{E}\backslash\{0\}}(U')
\end{align}

\medskip

where $\mathcal{O}^{\mathbb{D}^*}_{\underline{E}\backslash\{0\}}(U')$ denotes the supercommutative ring of $\mathbb{D}^*$-invariant sections of $\mathcal{O}_{\underline{E}\backslash \{0\}}$ on $U'$. One checks that this assignment is indeed a sheaf of $\mathcal{O}_B$-modules on $|\mathbb{P}_{\overline{B}}(\overline{E})|$.

\begin{defn}
The {\it $\Pi$-projective superspace} over $B$ is the ringed superspace $(|\mathbb{P}_{\overline{B}}(\overline{E})|,\mathcal{O}_{\mathbb{P}_{\Pi, B}(E)})$ over $B$.
\end{defn}

\subsection{Affine cells of $\mathbb{P}_\Pi(E)$}

For simplicity, we will restrict ourselves in this section to the case where $B$ is an affine $k$-superscheme, $E$ a free sheaf of $\mathbb{D}$-modules on $B$. (The general case will be treated in later papers). We will prove that $\mathbb{P}_\Pi(E)$ has a Zariski open covering by $B$-superaffine spaces. This will imply in particular that $(\mathbb{P}_{\overline{B}}(\overline{E}), \mathcal{O}_{\mathbb{P}_{B, \Pi}(E)})$ is a smooth $B$-superscheme. 

For this purpose, we may work locally on $B$, in a trivializing affine cover for $E$ as a $\mathbb{D}_B$-bimodule. So we may assume that $B = Spec(A)$, for some affine $k$-superalgebra $A$, and that $\mathcal{O}_V(B)$ is a free $\mathbb{D}_B$-bimodule of rank $n+1$, some $n$. Hence there is an $\mathcal{O}_B$-basis $\{e_i | f_i\}$ of $V$ such that $\phi(e_i) = f_i, \phi(f_i) = e_i$, $\psi(e_i) = f_i, \psi(f_i) = -e_i$ for $i = 0, \dotsc, n$.

Let $\{z_i | \zeta_i\}$ be linear functionals on $V$ dual to the basis $\{e_i | f_i\}$; we may then consider them as linear functions on $\underline{V}$. Similarly, let $t, \tau$ be linear functions on $\mathbb{D}^*$ dual to $1, \theta$. Then the action of a $T$-point $t + \theta \tau$ of $\mathbb{D}^*_B$ on a $T$-point $\sum_i e_i z_i + f_i \zeta_i$ of $\underline{V}$ becomes:

\begin{equation}
\begin{aligned}
&\sum_i (e_i z_i +  f_i \zeta_i) \cdot (t + \theta \tau)\\
= \, & \sum_i  (e_i z_i  + f_i \zeta_i)t^{-1} -  \psi(e_i z_i + f_i \zeta_i) t^{-2} \tau\\
=\, & \sum_i e_i (t^{-1} z_i - t^{-2} \tau \zeta_i) + f_i (t^{-1} \zeta_i - t^{-2} \tau z_i)
\end{aligned}
\end{equation}

\medskip

In these expressions we are abusing notation and writing $z_i$ for the pullback of $z_i$ to $\Gamma(\mathcal{O}_T)$, etc. This equality holds good independent of the choice of $T$-point. Hence the right $\mathbb{D}^*$ action on $\underline{V}$ may be written in terms of the $z_i$ and $\zeta_i$ as:

\begin{equation}
\begin{aligned}\label{eqn:actionincoordinates}
&(z_0, \zeta_0, \dotsc, z_n, \zeta_n) \cdot (t, \tau) = (t^{-1} z_0  - t^{-2} \tau \zeta_0, t^{-1} \zeta_0 - t^{-2} \tau z_0, \dotsc, \\ 
& t^{-1} z_n - t^{-2} \tau \zeta_n, t^{-1} \zeta_n - t^{-2} \tau z_n)
\end{aligned}
\end{equation}

\medskip

\noindent {\it Remark.} Although we have chosen specific coordinates for $\underline{V}$ in which the $\underline{\mathbb{D}}$- and $\mathbb{D}^*$-actions take a particularly simple form in order to facilitate our calculations, these actions were defined purely in terms of the $\mathbb{D}$-bimodule structure of $V$. Hence our constructions will depend only on the $\mathbb{D}$-bimodule structure of $V$, and not on any arbitrary choices.\\

To this end, let us consider the open subset $U'_i := D(z_i)$ of $\underline{E} \backslash \{0\}$. The image of $U'_i$ in $\mathbb{P}_{\overline{B}}(\overline{E})$ is then the open subset $U_i =  \{[\overline{z}_0, \dotsc \overline{z}_n ] : \overline{z}_i \neq 0\}$. The $\{U_i\}$, $i = 0, \dotsc, n$, form a Zariski open cover of $|\mathbb{P}_{\overline{B}}(\overline{E})|$.

We may now characterize the rings $\mathcal{O}_{\underline{E}\backslash\{0\}}^{\mathbb{D}^*}(U'_i)$.

\begin{prop}
$\mathcal{O}_{\underline{E}\backslash\{0\}}^{\mathbb{D}^*}(U'_i)$ is the $A$-superalgebra generated over $A$ by the functions:

\begin{align*}
&w^j_i := \frac {z_j} {z_i} - \frac {\zeta_i \zeta_j} {z^2_i}\\
&\eta^j_i := \frac {\zeta_j} {z_i} - \frac {z_j \zeta_i} {z^2_i}
\end{align*}\\

\noindent where $j \in \{0, 1, \cdots, \hat{i}, \cdots, n\} $. In
particular, $\mathcal{O}^{\mathbb{D}^*}(U_i)$ is a finitely-generated $A$-superalgebra.
\end{prop}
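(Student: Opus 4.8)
The plan is to verify directly that the displayed functions $w^j_i$ and $\eta^j_i$ are $\mathbb{D}^*$-invariant, then show they generate the full invariant ring. First I would check invariance. Working on $U'_i = D(z_i)$, the function $z_i$ is a unit, and formula \eqref{eqn:actionincoordinates} gives the action of a $T$-point $(t,\tau)$ of $\mathbb{D}^*$ on the pullbacks of the coordinate functions: $z_j \mapsto t^{-1} z_j - t^{-2}\tau \zeta_j$ and $\zeta_j \mapsto t^{-1}\zeta_j - t^{-2}\tau z_j$. I would substitute these into the expressions $z_j/z_i - \zeta_i\zeta_j/z_i^2$ and $\zeta_j/z_i - z_j\zeta_i/z_i^2$ and compute, keeping in mind $\tau^2 = 0$ and the sign rules. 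The expectation is that the $t$-factors cancel (each term is ``homogeneous of degree $0$'' in the $\mathbb{G}_m$-weight) and that the $\tau$-linear terms cancel among themselves, leaving $w^j_i$ and $\eta^j_i$ fixed; this is the routine calculation I would not grind through here but would present compactly. Concretely, $a^*(w^j_i) = p_2^*(w^j_i)$ and likewise for $\eta^j_i$, which by the definition of $\mathbb{D}^*$-invariance puts these functions in $\mathcal{O}_{\underline{E}\backslash\{0\}}^{\mathbb{D}^*}(U'_i)$; since they are sections of $\mathcal{O}_{\underline{E}\backslash\{0\}}$ on $U'_i$ and $\mathcal{O}_{\mathbb{P}_\Pi(E)}(U_i)$ is by construction exactly the ring of such invariants, the $A$-subalgebra they generate is contained in $\mathcal{O}_{\underline{E}\backslash\{0\}}^{\mathbb{D}^*}(U'_i)$.

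For the reverse inclusion — that these functions generate everything — I would argue as follows. A section of $\mathcal{O}_{\underline{E}\backslash\{0\}}$ on $U'_i$ is a polynomial in the $z_j, \zeta_j$ ($j \neq i$) and in $z_i^{\pm 1}, \zeta_i$, with coefficients in $A$; equivalently, an element of $A[z_0,\ldots,z_n,\zeta_0,\ldots,\zeta_n][z_i^{-1}]$. I would decompose such an element into $\mathbb{G}_m$-weight components under the residual torus action $z_j \mapsto s^{-1} z_j$, $\zeta_j \mapsto s^{-1}\zeta_j$, $s \in \mathbb{G}_m$; the invariance condition forces the function to lie in weight $0$, i.e. to be an $A$-linear combination of monomials with equal numbers of numerator and denominator variables counting $z_i^{-1}$ with weight $+1$. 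Then I would impose invariance under the remaining odd part of $\mathbb{D}^*$ (the $\tau$-direction): writing $g$ for a weight-$0$ element and expanding $a^*(g) - p_2^*(g)$ to first order in $\tau$ yields a first-order ``annihilation'' condition, namely that $g$ is killed by the odd derivation $\delta$ induced by $\tau \partial_\tau$ at the identity, which acts on coordinate functions by $\delta(z_j) = -\zeta_j$, $\delta(\zeta_j) = -z_j$ (up to signs to be pinned down), together with the correction coming from differentiating $t^{-2}$. The claim is that the kernel of $\delta$ on the weight-$0$ part of $A[z,\zeta][z_i^{-1}]$ is precisely $A[w^j_i, \eta^j_i : j\neq i]$. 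To see this I would produce, for each $j \neq i$, the identities expressing $z_j, \zeta_j$ in terms of $z_i, \zeta_i, w^j_i, \eta^j_i$ (inverting the defining relations, which is possible since $z_i$ is a unit), substitute, and check that $\delta$-invariance of the result forces all dependence on $z_i, \zeta_i$ to drop out — the pair $(w^j_i, \eta^j_i)$ being exactly the $\delta$-invariant ``coordinates'' on the slice $z_i = 1$.

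The main obstacle I anticipate is the second inclusion, specifically the bookkeeping needed to show that $\delta$-invariance (plus $\mathbb{G}_m$-invariance) forces membership in the subalgebra generated by the $w^j_i, \eta^j_i$: one must handle the interaction between the localization at $z_i$, the odd variable $\zeta_i$ (which does not become a unit), and the signs in the super-Leibniz rule, and be careful that ``first-order in $\tau$'' genuinely captures full invariance here — it does, because $\tau^2 = 0$ so the $\mathbb{D}^*$-action in the odd direction is genuinely first-order, but this needs to be stated cleanly. A clean way to organize it is to use the substitution $z_i = 1$ (pull back along the section $T \to U'_i$ where $z_i \equiv 1$) to identify $\mathcal{O}^{\mathbb{D}^*}(U'_i)$ with the subring of $A[z_j,\zeta_j,\zeta_i : j \neq i]$ fixed by the stabilizer of this slice — a copy of the odd line — and then the computation reduces to the single-variable identities $w^j_i|_{z_i=1} = z_j - \zeta_i\zeta_j$, $\eta^j_i|_{z_i=1} = \zeta_j - z_j\zeta_i$, whose $A$-algebra is visibly the invariant ring. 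Finite generation is then immediate, since there are $2n$ generators $w^j_i, \eta^j_i$ over $A$.
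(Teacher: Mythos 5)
Your proposal is correct in outline but takes a genuinely different route from the paper. The paper's argument is a ``leading term'' cancellation: it first proves a separate lemma (Lemma \ref{lem:zeroinvariant}) stating that any homogeneous $\mathbb{D}^*$-invariant section divisible by $\zeta_i$ vanishes (proved by expanding the pulled-back section over $\mathbb{D}^*\times_B U'_i$, isolating the part $Q$ proportional to $\tau$, and showing $Q=0$ forces all coefficients to vanish); it then observes that each product $\prod w^j_i\prod\eta^k_i$ has exactly one ``head term'' not containing $\zeta_i$, matches these against the head terms of an arbitrary invariant $s$, and subtracts to land in the situation of the lemma. You instead quotient by $\mathbb{G}_m$ first (weight-zero reduction, which agrees with the paper's first step) and then analyze the residual odd direction as a translation action on the slice $z_i=1$. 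That route works and is arguably more conceptual: after the change of variables $(u_j,\xi_j,\xi_i)\mapsto(w^j_i|_{z_i=1},\,\eta^j_i|_{z_i=1},\,\xi_i)$ (invertible since $u_j = w^j_i + \xi_i\eta^j_i$ on the slice), the odd action becomes pure translation $\xi_i\mapsto\xi_i-\tau$ fixing the other coordinates, so the invariants are visibly the polynomials independent of $\xi_i$. Two points in your sketch need tightening to make this airtight: the odd part of $\mathbb{D}^*$ does not literally stabilize the slice $z_i=1$ (it moves $z_i$ to $z_i-\tau\zeta_i$), so you must work with the induced action on the $\mathbb{G}_m$-quotient, identified with the slice; and the final claim that ``$\delta$-invariance forces all dependence on $\zeta_i$ to drop out'' is exactly the content that must be proved --- the change-of-coordinates observation above does it, but as written it is asserted rather than established. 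Your remark that first order in $\tau$ captures full invariance because $\tau^2=0$ is correct and worth keeping. What the paper's approach buys is a self-contained computation needing no discussion of slices or induced actions; what yours buys is a shorter and more transparent identification of the invariant ring, at the cost of setting up the quotient-then-translate picture carefully in the super setting.
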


\begin{proof} The $\mathbb{D}^*$-invariance of the functions $w^j_i, \eta^j_i$ is shown by a direct calculation. It remains to be shown that $w^j_i$, $\eta^j_i$ actually generate $\mathcal{O}^{\mathbb{D}^*} (U_i)$. For this we require the following lemma.

\begin{lem}\label{lem:zeroinvariant}
Let $s \in \mathcal{O}^{\mathbb{D}^*}(U_i)$ be a $\mathbb{Z}_2$-homogeneous invariant section. Suppose that $s$ is a multiple of $\zeta_i$. Then $s$ is identically zero.
\end{lem}

\begin{proof}
We begin by noting that $\pi^{-1}(U_i)$ is the affine $B$-superscheme with coordinate superalgebra $\mathcal{O}_{\pi^{-1}(U_i)} = A[z_0, \dotsc, z_n, \zeta_0, \dotsc, \zeta_n][z^{-1}_i]$. 

Since $\mathbb{D}^*$ and $\pi^{-1}(U_i)$ are both affine $B$-superschemes, we may work with their superalgebras of global functions. $s$ is in particular invariant under the subsupergroup $\mathbb{G}^{1|0}_m \subset \mathbb{D}^*$, which is true if and only if $s$ is a sum of rational functions of the form:

\begin{equation*}
s = \sum_{J,K} a_{JK}\frac {z^{p_1}_{j_1} z^{p_2}_{j_2} \cdots z^{p_{|J|}}_{j_{|J|}} \zeta_{k_1} \zeta_{k_2} \cdots \zeta_{k_{|K|}}} {z_i^{|K|+\sum_J p_j}}
\end{equation*}

\medskip

\noindent where $z_i$ does not appear in the numerator of any term, and $\zeta_i$ appears in the numerator of each term. Here $J, K$ are multiindices, and $a_{JK} \in \mathcal{O}_B$.

The equations that follow will all hold in $\mathcal{O}_{\mathbb{D}^* \times_B \pi^{-1}(U_i)} = \mathcal{O}_{\mathbb{D}^*} \otimes_B \mathcal{O}_{\pi^{-1}(U_i)} = A[z_0, \dotsc, z_n, \zeta_0, \dotsc, \zeta_n, t, \tau][z^{-1}_i]$. The pullback of $s$ by $(t + \theta \tau)^{-1}$ is:

\begin{align*}
&((t + \theta \tau)^{-1})^*(s)\\
=& \sum_{J,K} a_{JK} \frac {(t z_{j_1} + \tau \zeta_{j_1})^{p_1} \cdots (t z_{j_{|J|}}  + \tau \zeta_{j_{|J|}})^{p_{{|J|}}} (t \zeta_{k_1} + \tau z_{k_1}) \cdots (t \zeta_{k_{|K|}} + \tau z_{k_{|K|}})} {(t z_i + \tau \zeta_i)^{|K|+\sum_J p_j}}\\
=& \sum_{J,K} a_{JK} \frac {(t z_{j_1} + \tau \zeta_{j_1})^{p_1} \cdots (t z_{j_{|J|}} + \tau \zeta_{j_{|J|}})^{p_{|J|}} (t \zeta_{k_1} + \tau z_{k_1}) \cdots (t \zeta_{k_{|K|}} + \tau z_{k_{|K|}})} {(z_i t)^{|K|+\sum_J p_j}}
\end{align*}

\bigskip

\noindent The last equation holds since multiplication by $\tau \zeta_i$ annihilates the numerator of $(t + \tau \theta)^*(s)$ (by expanding the numerator as a polynomial in the $z$s, the $\zeta$s, $t$ and $\tau$, one sees by the assumptions of the proposition that every term must contain either $\zeta_i$ or $\tau$.) 

Let $P$ be the sum of all terms of $((t + \theta \tau)^{-1})^*(s)$ that do not contain $\tau$, and $Q$ the sum of those that do contain $\tau$. One may check by direct calculation that $P = s$. Hence $(t + \tau \theta)^*(s) = s$ implies that $Q = 0$. Then $(z_i t)^{|K|+\sum_J p_j} Q$ is the zero polynomial in $\mathcal{O}_{\mathbb{D}^* \times_B U_i}$.

Let us consider the terms of $(z_i t)^{|K|+\sum_J p_j} Q$ that contain $z_i$, call the sum of all such terms $Q'$. $Q'$ is a polynomial, and since the numerator of $s$ does not contain $z_i$, each term in $Q'$ contains only a linear power of $z_i$. We see that each term of $Q'$ must be a multiple of $z_i \tau$, obtained by substituting $z_i \tau$ for $\zeta_i$ in a corresponding term of $s$:

\begin{align*}
a_{j_1 \dotsc j_{|J|}, k_1 \dotsc k_{|K|}} z_{j_1}^{p_1} \dotsc z_{j_{|J|}}^{p_{|J|}} \zeta_{k_1} \dotsc \hat{\zeta_i} (z_i \tau) \dotsc \zeta_{k_{|K|}} t^{|K| + \sum_J p_j -1} 
\end{align*}

\bigskip

\noindent Conversely, every term of $s$ gives rise to a unique term of $Q'$ in this way. 

Since $z_i$ is algebraically independent from the other $z_j$s and $\zeta_k$s, $(z_i t)^{|K|+\sum_J p_j} Q = 0$ implies that $Q'$ must also be identically zero. (Alternatively, to see this one could differentiate the equation $(z_i t)^{|K|+\sum_J p_j} Q \\ = 0$ with respect to $z_i$). Hence all of the coefficients $a_{JK}$ must be zero, so that $s$ is identically zero.

\end{proof}

%\bigskip

%\bigskip

%\begin{lem}
%Let $t \in \mathcal{O}^G(U'_i)$ be an invariant section. Suppose
%that every term of $t$ contains $\zeta_i$.  Then $t$ is identically
%zero.
%\end{lem}

%\begin{proof}
%Suppose $t$ is such a section. We define a new vector field $Z' = Z
%- (z_i \partial_{\zeta_i} + \zeta_i \partial_{z_i})$. Then we have

%\begin{align*}
%Z(t) &= (z_i \partial_{\zeta_i} + \zeta_i \partial_{z_i})t + Z't\\
%&= z_i \partial_{\zeta_i}t + Z't
%=0
%\end{align*}\

%Since every term of $t$ contains $\zeta_i$, and $Z'$ contains no
%derivatives with respect to $\zeta_i$, all of the terms in $Z't$
%contain $\zeta_i$. However, none of the terms in $z_i
%\partial_{\zeta_i}t$ contain $\zeta_i$. As terms containing
%$\zeta_i$ are linearly independent of those not containing
%$\zeta_i$, we conclude that $z_i \partial_{\zeta_i} t = 0$ in
%$\mathcal{O}_{U'_i}$. Since $z_i$ is invertible in
%$\mathcal{O}(U'_i)$, we conclude that $\partial_{\zeta_i}t = 0$. But
%since we assumed that every term of $t$ contains $\zeta_i$, we
%deduce that $t = 0$.
%\end{proof}

Now we show that any invariant section $s$ may be written as a
polynomial in the functions $w_j, \eta_j$. Suppose then that $s$ is
such a section. Note that any product

\begin{equation*}
\prod_{j \in J} w_j \prod_{k \in K} \eta_k = \prod_{j \in J} \left(\frac {z_j} {z_i} - \frac {\zeta_i \zeta_j} {z^2_i}\right) \prod_{k \in K} \left( \frac {\zeta_k} {z_i} - \frac {\zeta_i z_k} {z^2_i} \right)
\end{equation*}

\medskip

\noindent of the $w_j$ and the $\eta_k$ contains exactly one term that does not contain $\zeta_i$ in the numerator, namely, the rational function:

\begin{equation*}
z_{J} \zeta_K := \frac {z_{j_1} z_{j_2} \cdots z_{j_{|J|}} \zeta_{k_1} \zeta_{k_2} \cdots \zeta_{k_{|K|}}} {z_i^{|K|+\sum_J p_j}}
\end{equation*}

\medskip

\noindent Note that $z_i$ also does not appear in the numerator of this rational function. We shall refer to rational functions of this type as ``head terms.'' Conversely, note that given any pair of multiindices $J, K$ for which $i \notin J$ and $i \notin K$, we may produce an invariant section with head term $z_J \zeta_K$ by taking $\prod_{j \in J} w_j \prod_{k \in K} \eta_k$.

Let $\sum_{J, K} a_{JK} {z_J  \zeta_K}$ be the sum of all head terms
in $s$. Then the section:

\begin{equation*}
s' := s - \sum_{J, K} a_{JK} \prod_{j \in J} w_j \prod_{k \in K} \eta_k
\end{equation*}\\

\noindent is $\mathbb{D}^*$-invariant, being a difference of $\mathbb{D}^*$-invariant sections. By our remark about products of the $w_j$ and $\eta_k$, all terms of $s'$ must contain $\zeta_i$, since the only head terms of $a_{JK} \prod_{j \in J} w_j \prod_{k \in K} \eta_k$ are $a_{JK} z_J \zeta_K$, and these cancel with the corresponding head terms in $s$ by construction. Therefore $s'$ is identically zero by Lemma \ref{lem:zeroinvariant}, i.e.

\begin{equation*}
s = \sum_{J, K} a_{JK} \prod_{j \in J} w_j \prod_{k \in K} \eta_k
\end{equation*}

\medskip

\noindent which is what we wished to prove.

\end{proof}

As a consequence, we may now show that $\mathbb{P}_{\Pi, B}(V)$ so defined is actually covered by affine superspaces:

\begin{cor}
$\mathcal{O}_{\mathbb{P}^n_\Pi}(U_i)$ is a free commutative $A$-superalgebra on $n|n$ variables.
\end{cor}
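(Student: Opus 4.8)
The plan is to promote the preceding Proposition, which exhibits a \emph{surjection} onto $\mathcal{O}_{\mathbb{P}^n_\Pi}(U_i)=\mathcal{O}_{\underline{E}\backslash\{0\}}^{\mathbb{D}^*}(U'_i)$ from the free commutative $A$-superalgebra on $n|n$ indeterminates, to an \emph{isomorphism}, by checking that the generators $w^j_i$ (even) and $\eta^j_i$ (odd), $j\in\{0,\dots,\widehat{i},\dots,n\}$, satisfy no relations beyond those imposed by supercommutativity. Precisely, let $\mathcal{P}$ be the free commutative $A$-superalgebra with even indeterminates $x_j$ and odd indeterminates $\xi_j$ ($j\neq i$), and let $\Phi:\mathcal{P}\to\mathcal{O}_{\underline{E}\backslash\{0\}}^{\mathbb{D}^*}(U'_i)$ be the $A$-algebra homomorphism with $\Phi(x_j)=w^j_i$, $\Phi(\xi_j)=\eta^j_i$. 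By the Proposition $\Phi$ is surjective, so the Corollary reduces to showing $\Phi$ is injective.

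For injectivity I will re-use the ``head term'' bookkeeping already set up in the proof of the Proposition. Recall $\mathcal{O}_{\pi^{-1}(U_i)}=A[z_0,\dots,z_n,\zeta_0,\dots,\zeta_n][z^{-1}_i]$ is a free $A$-module on the monomials $z_0^{a_0}\cdots z_n^{a_n}\zeta_K$ with $a_i\in\mathbb{Z}$, $a_j\in\mathbb{Z}_{\geq 0}$ for $j\neq i$, and $K\subseteq\{0,\dots,n\}$. A monomial $x_J\xi_K$ of $\mathcal{P}$ is sent by $\Phi$ to $\prod_{j\in J}w^j_i\prod_{k\in K}\eta^k_i$, and by the computation in the Proposition this product, expanded in the above $A$-basis, contains exactly one term free of $\zeta_i$ in the numerator — the head term $z_J\zeta_K/z_i^{|K|+\sum_J p_j}$, occurring with coefficient $1$ — while every other term carries $\zeta_i$ in the numerator. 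Two elementary combinatorial facts then suffice: (i) the map $(J,K)\mapsto z_J\zeta_K/z_i^{|K|+\sum_J p_j}$ is injective on pairs with $i\notin J\cup K$, since the numerator alone recovers $J$ (as a multiindex) and $K$; and (ii) no head term of one monomial of $\mathcal{P}$ can equal a non-head term of another, the former being free of $\zeta_i$ and the latter not.

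With these in hand, suppose $\Phi(p)=0$ for $p=\sum_{J,K}a_{JK}\,x_J\xi_K$. Expanding $\Phi(p)=\sum_{J,K}a_{JK}\Phi(x_J\xi_K)$ in the $A$-basis of $A[z_*,\zeta_*][z^{-1}_i]$ and extracting the coefficient of the basis element $z_J\zeta_K/z_i^{|K|+\sum_J p_j}$, facts (i) and (ii) show that this coefficient is exactly $a_{JK}$; hence $a_{JK}=0$ for all $(J,K)$, $p=0$, and $\Phi$ is injective. Thus $\Phi$ is an isomorphism, presenting $\mathcal{O}_{\mathbb{P}^n_\Pi}(U_i)$ as the free commutative $A$-superalgebra on the $n$ even variables $w^j_i$ and the $n$ odd variables $\eta^j_i$ (and, as noted, this covering by $B$-superaffine spaces forces $(\mathbb{P}_{\overline{B}}(\overline{E}),\mathcal{O}_{\mathbb{P}_{B,\Pi}(E)})$ to be a smooth $B$-superscheme). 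I do not anticipate a genuine obstacle here: the only real content is organizing the head-term computation of the Proposition into the linear-independence statements (i) and (ii), which is routine bookkeeping in the localized polynomial superalgebra; an alternative would be a Hilbert-series comparison against $\mathcal{O}_{\mathbb{P}_{\overline{B}}(\overline{E})}(U_i)$ after filtering by the number of odd variables, but the head-term argument is more direct.
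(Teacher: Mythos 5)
Your proposal is correct and follows essentially the same route as the paper: both reduce to injectivity of the surjection from the free commutative $A$-superalgebra, and both establish it by isolating, in the expansion of $\prod_{j\in J}w^j_i\prod_{k\in K}\eta^k_i$, the unique ``head term'' $z_J\zeta_K/z_i^{|K|+\sum_J p_j}$ free of $\zeta_i$, then invoking the $A$-linear independence of these head terms to conclude that all coefficients vanish. Your facts (i) and (ii) are just a slightly more explicit packaging of the paper's observation that all non-head terms are multiples of $\zeta_i$ while the head terms are linearly independent.
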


\begin{proof}
Without loss of generality we may take $i = 0$, the argument being the same for the other values of $i$, after reindexing the variables. Let $C$ be the free $A$-superalgebra $A[y_1, \dotsc y_n] \otimes \Lambda[\tau_1, \dotsc, \tau_n]$ on $n|n$ variables. We define a homomorphism $F: C \rightarrow \mathcal{O}_{\mathbb{P}^n_\Pi} (U_i)$ by sending $y_j \mapsto w_j, \tau_j \mapsto \eta_j$. By the above proposition, $F$ is surjective.

We proceed to show that $F$ is injective as well. To this end, let
$P = \sum_{J,K} a_{JK} y_J \tau_K$. Here, as before, we will use the
multindex notation:

\begin{align*}
&y_J := y^{p_1}_{j_1}y^{p_2}_{j_2} \dotsc y^{p_{|J|}}_{j_{|J|}}\\
&\tau_K := \tau_{k_1} \tau_{k_2} \dotsc \tau_{k_{|K|}}
\end{align*}

\bigskip

\noindent We have:

\begin{align*}
F(P) &= \sum_{J,K} a_{JK} F(y_J) F(\tau_K)\\
&= \sum_{J,K} a_{JK} w_J \eta_K\\
&= \sum_{J,K} a_{JK}  \prod_{j \in J} \bigg( \frac {z_j} {z_0} - \frac {\zeta_0 \zeta_j} {z^2_0} \bigg) \prod_{k \in K} \bigg( \frac {\zeta_k} {z_0} - \frac {\zeta_0 z_k} {z^2_0} \bigg)\\
&=0
\end{align*}

\bigskip

For each pair of multi-indices $J,K$ there is a unique head term in
$F(P)$:\

\begin{equation*}
a_{JK} z_J \zeta_K := a_{JK} \prod_{j \in J} \frac {z_j} {z_0} \prod_{k \in K} \frac {\zeta_k} {z_0}
\end{equation*}

\bigskip

\noindent Since all other terms besides $a_{JK}z_J \zeta_K$ are multiples of $\zeta_i$, $F(P) = 0$ implies that $\sum_{JK} a_{JK} z_J \zeta_K =0$. But the rational functions $z_J \zeta_K$ are $\mathcal{O}_B$-linearly independent, so we conclude that $a_{JK} = 0$ for all multi-indices $J,K$, i.e. $P=0$, which is what we wanted to prove.
\end{proof}

From this, we deduce certain important properties of $\mathbb{P}_{\Pi, B}(V)$ from this result. First, this shows that $\mathcal{O}_{\mathbb{P}_{\Pi, B}(V)}$ is a sheaf of {\it local} super rings. Hence $\mathbb{P}_{\Pi, B}(V)$ is indeed a $B$-superscheme. Second, this implies that $\mathbb{P}_{\Pi, B}(V)$ is of finite type over $B$, and smooth over $B$.

\section{$\mathbb{P}^n_\Pi$ as a quotient}

In \cite{Levin}, it is stated without proof that $\mathbb{P}^n_\Pi$ is a quotient of $\mathbb{C}^{n+1|n+1}$ by $\mathbb{G}^{1|1}_m$. A more precise formulation of this statement is given by the following:

\begin{prop}
Let $B$ be an affine $k$-superscheme, $E$ a free $\mathbb{D}_B$-bimodule. Then $\underline{E} \backslash \{0\}$ is a $\mathbb{D}^*$-principal bundle over $\mathbb{P}_{\Pi, B}(E)$, via the projection map $\pi: \underline{E} \backslash \{0\} \to \mathbb{P}_{\Pi, B}(E)$. 
\end{prop}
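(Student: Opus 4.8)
The plan is to verify the standard local-triviality criterion for a principal bundle: I must exhibit a Zariski open cover $\{U_i\}$ of $\mathbb{P}_{\Pi,B}(E)$ together with $\mathbb{D}^*$-equivariant isomorphisms $\pi^{-1}(U_i) \cong \mathbb{D}^*_B \times_B U_i$ over $U_i$, where $\mathbb{D}^*$ acts on the target by right translation on the first factor. I will take the cover $\{U_i\}$, $i = 0, \dotsc, n$, constructed in the previous section, so that $\pi^{-1}(U_i) = U'_i = D(z_i) \subseteq \underline{E}\backslash\{0\}$, and $\mathcal{O}(U_i)$ is the free $A$-superalgebra on the invariants $w^j_i, \eta^j_i$. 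Fixing $i$ (say $i = 0$ after reindexing), the key observation is that on $U'_0$ the function $z_0$ is invertible and the data $(z_0, \zeta_0)$ assemble into a $T$-point of $\mathbb{D}^*$ via $z_0 + \theta\zeta_0$ (even invertible part $z_0$, odd part $\zeta_0$). I would then define a morphism $\Phi_0 : \pi^{-1}(U_0) \to \mathbb{D}^*_B \times_B U_0$ on the functor of points by sending $(z_0, \zeta_0, \dotsc, z_n, \zeta_n) \mapsto \big( z_0 + \theta \zeta_0,\ (w^1_0, \eta^1_0, \dotsc, w^n_0, \eta^n_0) \big)$, i.e. the group coordinate records the ``scale'' $z_0 + \theta\zeta_0$ and the base coordinates record the invariant ratios.

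The steps, in order: (1) check $\Phi_0$ lands in $\mathbb{D}^*_B \times_B U_0$ and is a morphism of $B$-superschemes — immediate from the formulas, since $z_0$ is a unit on $U'_0$ and the $w^j_0, \eta^j_0$ are precisely the coordinates on $U_0$. (2) Check $\Phi_0$ commutes with the projections to $U_0$: the composite $\pi^{-1}(U_0) \xrightarrow{\Phi_0} \mathbb{D}^*_B \times_B U_0 \to U_0$ equals $\pi$ restricted to $U'_0$, by construction of $\mathcal{O}_{\mathbb{P}_\Pi(E)}(U_0)$ as the invariant subring. (3) Check $\Phi_0$ is $\mathbb{D}^*$-equivariant: using the explicit action formula \eqref{eqn:actionincoordinates}, a $T$-point $t + \theta\tau$ sends $(z_0,\zeta_0)$ to $(t^{-1}z_0 - t^{-2}\tau\zeta_0,\ t^{-1}\zeta_0 - t^{-2}\tau z_0)$, which is exactly $(t+\theta\tau)^{-1}(z_0 + \theta\zeta_0)$ in $\mathbb{D}^*$; meanwhile the $w^j_0, \eta^j_0$ are invariant, so $\Phi_0$ intertwines the given action with right translation by $(t+\theta\tau)^{-1}$ on the group factor — matching whatever right-action convention was fixed for the trivial bundle. (4) Construct the inverse: given $(t+\theta\tau, (w^j, \eta^j))$, recover $z_0 = t$, $\zeta_0 = \tau$, and then solve for $z_j, \zeta_j$ from $w^j = z_j/z_0 - \zeta_0\zeta_j/z_0^2$ and $\eta^j = \zeta_j/z_0 - z_j\zeta_0/z_0^2$; since $z_0 = t$ is a unit, this $2\times 2$ (super)linear system over the ring is invertible — this is where $2$ being invertible / $z_0$ being a unit is used — giving $z_j = z_0 w^j + \zeta_0 \eta^j$ and $\zeta_j = z_0 \eta^j + \zeta_0 w^j$ (up to signs I would pin down carefully). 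One checks $\Phi_0$ and this map are mutually inverse on points, hence $\Phi_0$ is an isomorphism by Yoneda. (5) Conclude: since the $U_i$ cover $\mathbb{P}_{\Pi,B}(E)$ and $\pi$ is visibly $\mathbb{D}^*$-invariant with these local trivializations, $\underline{E}\backslash\{0\}$ is a $\mathbb{D}^*$-principal bundle over $\mathbb{P}_{\Pi,B}(E)$.

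The main obstacle I anticipate is bookkeeping rather than conceptual: getting the sign conventions and the left-versus-right action conventions consistent between \eqref{eq:actionformula}, the inversion-twisted action of $\mathbb{D}^*$, and the chosen trivialization, so that ``equivariant'' means equivariant for the same action on both sides. A secondary point requiring a line of justification is that the formula defining $\Phi_0$ on $T$-points is natural in $T$ (so that Yoneda applies) — this is routine since everything is given by universal polynomial/rational expressions in the coordinates, valid on $D(z_0)$ where $z_0$ is inverted — and that $\mathbb{D}^*_B \times_B U_0$ is itself affine over $B$ with the expected coordinate ring, so that checking the isomorphism on global functions suffices. No deep input is needed beyond the explicit description of $\mathcal{O}_{\mathbb{P}_\Pi(E)}(U_i)$ already established and the invertibility of $z_i$ on $U'_i$.
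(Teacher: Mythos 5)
Your proposal is correct and follows essentially the same route as the paper: the same cover $\{U_i\}$, the same trivialization sending a point of $U_i'$ to the group coordinate $z_i + \theta\zeta_i \in \mathbb{D}^*$ together with the invariants $w^j_i, \eta^j_i$, and the same inverse $z_j = z_i w^j_i + \zeta_i \eta^j_i$, $\zeta_j = z_i \eta^j_i + \zeta_i w^j_i$. The paper likewise leans on the fact that the $w^j_i, \eta^j_i$ freely generate $\mathcal{O}(U_i)$ to see that these formulas define morphisms, and omits the equivariance and mutual-inverse computations as routine, exactly the bookkeeping you flag.
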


\begin{proof}
We begin by noting that the open subsets $|U'_i| = \{(z_0, \dotsc z_n): z_i \neq 0\}$ of $|{\bf Spec}_B(E) \backslash \{0\}|$ are invariant under the action of the reduced group $\mathbb{G}_m$ of $\mathbb{D}^*$:

\begin{equation*}
t \cdot (z_0, \dotsc, z_n, \zeta_0, \dotsc \zeta_n) = (t^{-1} z_0, \dotsc, t^{-1} z_n, t^{-1} \zeta_0, \dotsc t^{-1} \zeta_n)
\end{equation*}

\noindent hence the $U'_i$ are invariant under the action of $\mathbb{D}^*$. We will show that the $U'_i$ are isomorphic, as $\mathbb{D}^*$-superschemes, to $\mathbb{D}^* \times_B U_i$, where the latter is regarded as a $\mathbb{D}^*$-superscheme by multiplication on the first factor.

We shall construct such an isomorphism $\rho: U'_i \to \mathbb{D}^* \times_B U_i$ with the aid of the invariant sections $w^j_i, \eta^j_i$ of $\mathcal{O}(U'_i)$. 

Let $t, \tau$ be coordinates on $\mathbb{D}^*$ (i.e. linear functionals that generate $\Gamma(\mathbb{D}^*, \mathcal{O}_{\mathbb{D}^*}$ as a sheaf of $\mathcal{O}_B$-superalgebras), and $z_0, \dotsc, z_n, \zeta_0, \dotsc \zeta_n$ the linear coordinates on ${\bf Spec}_B(V) \backslash \{0\}$. We define a $B$-morphism $\rho: U'_i \to \mathbb{D}^* \times_B U_i$ by:

\begin{align*}
&\Phi(z_0, \dotsc, z_n, \zeta_0, \dotsc, \zeta_n)\\
=&((z_i, \zeta_i ), \tfrac {z_0} {z_i} - \tfrac {\zeta_i \zeta_0} {z^2_i}, \dotsc, \tfrac {z_{i-1}} {z_i} - \tfrac {\zeta_i \zeta_{i-1}} {z^2_i}, \tfrac {z_{i+1}} {z_i} - \tfrac {\zeta_i \zeta_{i+1}} {z^2_i}, \dotsc,  \tfrac {z_n} {z_i} - \tfrac {\zeta_i \zeta_n} {z^2_i},\\
& \tfrac {\zeta_0} {z_i} - \tfrac {\zeta_i z_0} {z^2_i}, \dotsc, \tfrac {\zeta_{i-1}} {z_i} - \tfrac {\zeta_i z_{i-1}} {z^2_i}, \tfrac {\zeta_{i-1}} {z_i} - \tfrac {\zeta_i z_{i-1}} {z^2_i}, \dotsc, \tfrac {\zeta_n} {z_i} - \tfrac {\zeta_i z_n} {z^2_i})
\end{align*}

\medskip

Define $\Psi: \mathbb{D}^* \times_B U_i \to U'_i$ by:

\begin{align*}
&\Psi((t, \tau), w_0, \dotsc, w_{i-1}, w_{i+1}, \dotsc, w_n, \eta_0, \dotsc, \eta_{i-1}, \eta_{i+1}, \dotsc, \eta_n)\\
=&(sw_0 + \sigma \eta_0, \dotsc, sw_{i-1} + \sigma \eta_{i-1}, s, sw_{i+1} + \sigma \eta_{i+1},  \dotsc, sw_n + \sigma \eta_n, \\
& s\eta_0 + \sigma w_0, \dotsc, s\eta_{i-1} + \sigma w_{i-1}, \sigma, s\eta_{i+1} + \sigma w_{i+1}, \dotsc, s\eta_n + \sigma w_n)
\end{align*}

\medskip

Since the sections $w^j_i, \eta^j_i$ freely generate $\mathcal{O}_{\mathbb{P}^n_\Pi}$ as an $\mathcal{O}_B$-superalgebra on the open set $U_i = \pi(U'_i) \subset \mathbb{P}^n_\Pi$, the above equations do indeed define morphisms of $B$-superschemes.

A direct calculation, which is lengthy but straightforward and thus omitted, shows that $\rho$ and $\Psi$ are mutually inverse, so that $\rho$ is an isomorphism of $B$-superschemes. $\mathbb{D}^*$-equivariance of $\rho$ is checked similarly.

\end{proof}

\section{$\Pi$-invertible sheaves}

Let $X/B$ be a $B$-supermanifold ($B$-superscheme), where $B$ is a complex analytic supermanifold, or a superscheme. The following definition is due to Skornyakov \cite{ma1}:

\medskip

\begin{defn}
A {\it $\Pi$-invertible sheaf} on $X/B$ is a pair $(S, \phi)$, where $S$ is a locally free sheaf of $\mathcal{O}_{X/B}$-modules of rank $1|1$, and $\phi \in H^0(X, \underline{End}(S))$ is an odd endomorphism of $S$ such that $\phi^2 = 1$. A {\it morphism} of right $\Pi$-invertible sheaves $f: (S, \phi) \to (S',\phi')$ is a homomorphism of locally free sheaves $f: S \to S'$ such that $f \circ \phi = \phi' \circ f$.
\end{defn}

\medskip

In the category of $B$-superschemes, where $B$ a $k$-superscheme, $k$ an algebraically closed field of characteristic $\neq 2$, the concept of $\Pi$-invertible sheaf may be given a new interpretation, as suggested by Deligne \cite{Del1} in the complex analytic case: it is completely equivalent to the concept of a rank 1 locally free sheaf of right $\underline{\mathbb{D}}_{X/B}$ modules, and a morphism of $\Pi$-invertible sheaves is precisely the same thing as a morphism of right $\underline{\mathbb{D}}_{X/B}$-modules.

To show one direction, suppose $(S, \phi)$ is a $\Pi$-invertible sheaf on $X \to B$. The right $\underline{\mathbb{D}}_X$-action on $S$ is recovered by the formula: 

\begin{align*}
s \cdot (a + \theta \alpha) := s a + (-1)^{|s|} \phi(s) \alpha 
\end{align*}

\medskip

\noindent for any open set $U \subseteq X$ such that $\pi(U) \subseteq V$, where $V$ is an open subset of $B$ on which $\underline{\mathbb{D}}_B$ has basis $1, \theta$, $s \in \mathcal{O}_S(U)$, $a + \theta \alpha \in \mathcal{O}_{\underline{\mathbb{D}}_{X/B}}(U)$. Since such $U$ form a basis for the topology of $X$, this defines a right $\underline{\mathbb{D}}_X$-action on $S$. Then by Prop. \ref{prop:canonicalbasis}, $S$ is locally free of rank $1$ as a sheaf of right $\underline{\mathbb{D}}_X$-modules. 

%Have to show that analogue of this proposition works for $X/B$%

For the converse, suppose that $S$ is a sheaf of locally free, rank 1 right $\underline{\mathbb{D}}_X$-modules. Then by Prop. \ref{prop:relativeazumayasheaf}, $S$ is a locally free, rank $1|1$ sheaf of $\mathcal{O}_X$-modules. The action of $\theta$ defines an odd endomorphism $\phi$ of $S$:

\begin{align*}
\phi(s) := (-1)^{|s|} s \cdot \theta
\end{align*}

\medskip

\noindent for $s$ a homogeneous section of $S$ over any open set $U \subseteq X$. One readily sees that $\phi$ is well-defined and $\mathcal{O}_X$-linear, and that $\phi^2 = 1$, so $(S, \phi)$ is a $\Pi$-invertible sheaf. Now it is routine to check that a morphism of $\Pi$-invertible sheaves $f: (S, \phi) \to (S', \phi')$ is precisely the same thing as a morphism $f: S \to S'$ of right $\underline{\mathbb{D}}_X$-modules.

It follows from this discussion that the transition functions of a $\Pi$-invertible sheaf $(S, \phi)$ on $X$ may be reduced to $GL(1, \underline{\mathbb{D}}_X) = \underline{\mathbb{D}}_X^*$, and by standard arguments in the cohomology theory of sheaves of groups, it may be shown that the set of isomorphism classes of $\Pi$-invertible sheaves is in bijective correspondence with the sheaf cohomology set $H^1(X, \underline{\mathbb{D}}^*_X)$.

\section{$\mathbb{D}$-hyperplane bundle on $\mathbb{P}_\Pi(E)$}

The $\Pi$-projective superspace $\mathbb{P}_\Pi(E)$ is endowed with a natural $\Pi$-invertible sheaf $\mathcal{O}_\Pi(1)$, analogous to the hyperplane bundle $\mathcal{O}(1)$ on ordinary projective space. Intuitively, the fiber of this $\Pi$-invertible sheaf over a point $W \in \mathbb{P}_\Pi(E)$ (i.e., a free, rank $1$ right $\mathbb{D}$-module of $E$) is the free, rank $1$ right $\mathbb{D}$-module $W^\vee$. 

In this section, we shall give a definition of $\mathcal{O}_\Pi(1)$ using the super skew field $\mathbb{D}$, describe its basic properties, and use it to characterize $B$-morphisms $X \to \mathbb{P}_{\Pi, B}(E)$ for any affine $B$-superscheme $X/B$. The existence and key properties of $\mathcal{O}_\Pi(1)$ were also mentioned in \cite{ma1}, without proofs.

%Let us consider the noncommutative superalgebra $\underline{Sym}(V^*) \otimes \mathbb{D}$, and let $W$ be the subsuperalgebra of $V^* \otimes \mathbb{D}$ generated over $B$ by $\underline{Hom}_\mathbb{D}(V_\mathbb{D}, \mathbb{D})$ (i.e. the intersection of all $B$-subsuperalgebras of $V^* \otimes \mathbb{D}$ that contain $\underline{Hom}_\mathbb{D}(V_\mathbb{D}, \mathbb{D})$). $W$ is clearly stable under right multiplication by $\mathbb{D}$, hence is a right $\mathbb{D}$-module.%

\begin{defn}
Let $E$ be a locally free, rank $n$ sheaf of $\underline{\mathbb{D}}_B$-bimodules. The $\Pi$-invertible sheaf $\mathcal{O}_\Pi(1)$ is the sheaf defined by:

\begin{comment}
\begin{equation*}
\mathcal{O}_\Pi(1)(U) := \{s \in \underline{\mathbb{D}}_{\underline{E}}(\pi^{-1}(U)) : \gamma^*(s) = s \cdot \gamma^{-1}\; \forall \, \gamma \in \mathbb{D}^*_B(\pi^{-1}(U)) \}.
\end{equation*}\
\end{comment}

\begin{equation*}
\mathcal{O}_\Pi(1)(U) := p^*(E^\vee)(U).
\end{equation*}

\medskip

\noindent Here $E^\vee$ denotes the sheaf $\underline{Hom}(E_{\underline{\mathbb{D}}}, \underline{\mathbb{D}})$, $p: \mathbb{P}_\Pi(E) \to B$ the structure morphism.
\end{defn}

\noindent 

The first order of business is to verify that $\mathcal{O}_\Pi(1)$ so defined is indeed a $\Pi$-invertible sheaf.

\begin{prop}
$\mathcal{O}_\Pi(1)$ is a $\Pi$-invertible sheaf on $\mathbb{P}_{\Pi,B}(E)$.
\end{prop}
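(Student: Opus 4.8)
The plan is to show that $\mathcal{O}_\Pi(1) := p^*(E^\vee)$ carries a canonical odd endomorphism $\phi_1$ with $\phi_1^2 = 1$, and that it is locally free of rank $1|1$ over $\mathcal{O}_{\mathbb{P}_\Pi(E)}$. Since $E$ is a locally free sheaf of $\underline{\mathbb{D}}_B$-bimodules, the $\mathbb{D}$-dual $E^\vee = \underline{Hom}_{\underline{\mathbb{D}}_B}(E_{\underline{\mathbb{D}}}, \underline{\mathbb{D}}_B)$ (using the right $\underline{\mathbb{D}}_B$-structure $\phi$ on $E$) is again a locally free sheaf of right $\underline{\mathbb{D}}_B$-modules, of rank $n$; the right action comes from right multiplication of $\underline{\mathbb{D}}_B$ on itself. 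By the discussion in the $\Pi$-invertible sheaves section, this right $\underline{\mathbb{D}}_B$-module structure is the same datum as an odd endomorphism with square $1$. So the first step is: equip $E^\vee$ with its right $\underline{\mathbb{D}}_B$-action, hence with an odd endomorphism $\phi^\vee$ of the underlying rank $n|n$ sheaf with $(\phi^\vee)^2 = 1$.

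Next I would pull back along the structure morphism $p: \mathbb{P}_\Pi(E) \to B$. Since $E^\vee$ is locally free of rank $n|n$ over $\mathcal{O}_B$, the pullback $p^*(E^\vee)$ is locally free of rank $n|n$ over $\mathcal{O}_{\mathbb{P}_\Pi(E)}$, and $p^*(\phi^\vee)$ is an odd $\mathcal{O}_{\mathbb{P}_\Pi(E)}$-linear endomorphism of $p^*(E^\vee)$ squaring to $1$. This already makes $p^*(E^\vee)$ a locally free sheaf of right $\underline{\mathbb{D}}_{\mathbb{P}_\Pi(E)/B}$-modules, of rank $n$ over $\underline{\mathbb{D}}$. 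To get a $\Pi$-\emph{invertible} sheaf I need rank $1|1$ over $\mathcal{O}$, i.e. rank $1$ over $\underline{\mathbb{D}}$, which is not literally what $p^*(E^\vee)$ is — so the key point is that $\mathcal{O}_\Pi(1)$ should be not all of $p^*(E^\vee)$ but the subsheaf cut out by the ``tautological'' relation. I would reconcile this by working in an affine chart $U_i$: there $\pi^{-1}(U_i) = U_i'= D(z_i) \subseteq \underline{E}\setminus\{0\}$ carries the tautological section, and restricting linear functionals on $E$ to the tautological rank-$1$ right $\mathbb{D}$-submodule gives, on each $U_i$, a free rank $1$ right $\underline{\mathbb{D}}$-module generated by (the class of) $z_i$, with transition to $U_j$ given by the invertible element of $\underline{\mathbb{D}}^*$ implementing $z_i = z_j\cdot(\text{unit})$. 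Concretely one checks using Proposition \ref{prop:canonicalbasis} that over $U_i$ the sheaf is free of rank $1|1$ with homogeneous basis $\{z_i \mid \phi^\vee(z_i)\}$, that $\phi^\vee$ swaps them, and that the transition functions on overlaps $U_i \cap U_j$ lie in $\underline{\mathbb{D}}^*_{\mathbb{P}_\Pi(E)}$ and intertwine the $\phi^\vee$'s.

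Thus the verification reduces to three routine local checks on the cover $\{U_i\}$: (i) the restriction of $p^*(E^\vee)$ to the tautological line is locally free of rank $1|1$ over $\mathcal{O}_{\mathbb{P}_\Pi(E)}$; (ii) $\phi^\vee$ restricts to a well-defined odd endomorphism of it with square $1$; (iii) the two structures glue, i.e.\ are independent of the chart, which follows because both the dual $\underline{\mathbb{D}}$-module structure and the odd endomorphism were defined intrinsically from the $\underline{\mathbb{D}}_B$-bimodule structure of $E$, with no arbitrary choices. I expect the main obstacle to be purely bookkeeping: namely, being careful about left-versus-right $\underline{\mathbb{D}}$-actions (the left action $\psi$ is what descends to $\mathbb{P}_\Pi$ via the $\mathbb{D}^*$-quotient, while the right action $\phi$ is what survives on the dual and provides the $\Pi$-structure), and verifying that the tautological subsheaf of $p^*(E^\vee)$ is stable under $\phi^\vee$ and locally a free rank-$1$ $\underline{\mathbb{D}}$-module rather than something of higher rank. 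Once the local picture on each $U_i$ is pinned down, Proposition \ref{prop:canonicalbasis} does all the real work, since it guarantees that any rank $1|1$ right $\underline{\mathbb{D}}$-module over the chart's coordinate ring is isomorphic to $\underline{\mathbb{D}}$ itself, so the local freeness of rank $1$ as a $\underline{\mathbb{D}}$-module is automatic and we only need it to be rank $1|1$ over $\mathcal{O}$.
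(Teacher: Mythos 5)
Your plan follows essentially the same route as the paper's proof: reduce to the standard cover $\{U_i\}$, exhibit the local $\Pi$-symmetric generators attached to $z_i$ (namely $s_i = z_i + \theta\zeta_i$ and $\sigma_i = \zeta_i + \theta z_i$), and observe that the transition elements $(z_i+\theta\zeta_i)^{-1}(z_j+\theta\zeta_j)$ lie in $\underline{\mathbb{D}}^*$, with components exactly the invariant coordinates $w^j_i, \eta^j_i$. One point genuinely in your favor: you correctly flag that the literal inverse image $p^*(E^\vee)$ is locally free of rank $n|n$, not $1|1$, so the definition only yields a $\Pi$-invertible sheaf after passing to the restriction of $\mathbb{D}$-linear functionals to the tautological $\mathbb{D}$-line; the paper handles this implicitly by computing with the $z_j, \zeta_j$ as functions on $\underline{E}\setminus\{0\}$, where the relations $s_j = s_i\, w^j_i + \sigma_i\, \eta^j_i$ actually hold (they would be false in the abstract pullback module). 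Two cautions, though. First, what you call the ``subsheaf cut out by the tautological relation'' is really a \emph{quotient} of $p^*(E^\vee)$ (restriction of functionals to a submodule), not a subsheaf; this does not derail the argument but the wording should be corrected. Second, Proposition \ref{prop:canonicalbasis} does not do ``all the real work'': it only converts a module \emph{already known} to be free of rank $1|1$ over $\mathcal{O}$ with an odd involution into a free rank-$1$ $\mathbb{D}$-module. The substantive step, which your ``routine local check (i)'' conceals, is proving that $s_i, \sigma_i$ both span and are $\mathcal{O}_{\mathbb{P}_\Pi(E)}(U_i)$-linearly independent; the paper establishes spanning via the displayed identities together with the fact that the $s_j, \sigma_j$ generate $E^\vee$ over $\mathcal{O}_B$, and independence by solving the system $z_i a + \zeta_i\alpha = 0$, $\zeta_i a + z_i\alpha = 0$ using the invertibility of $z_i$ on $U_i$. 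With those computations supplied, your outline is correct.
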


\begin{proof}
$\mathcal{O}_\Pi(1)$ inherits a natural right $\mathbb{D}$-module structure, given by the $\mathbb{D}$-action on $E^\vee$ by {\it right} multiplication.

To check local freeness, we may work locally on $B$, in an affine cover trivializing $E$ as a sheaf of $\mathbb{D}$-bimodules. So let us assume that $B = Spec(A)$, and that there is a $B$-basis $\{e_i | f_i\}$ of $\Gamma(E)$ such that $\phi(e_i) = f_i, \phi(f_i) = e_i, \psi(e_i) = f_i, \psi(f_i) = -e_i$. Let $\{z_i, \zeta_i\}$ be a basis of $B$-linear functionals on $E$, dual to $\{e_i, f_i\}$ respectively. 

We sketch the calculation that $s_j := z_j + \theta \zeta_j, \sigma_j := \zeta_j + \theta z_j$, $j = 0, \dotsc n$ is a $B$-basis of $V^\vee$. Suppose $s \in E^\vee$; we may as well assume $s$ is even. Since $s$ is $\mathbb{D}$-linear it must in particular be $\mathcal{O}_B$-linear. Then $s = \sum_j z_j a_j + \zeta_j \alpha_j + \theta (\zeta_j b_j + z_j \beta_j)$. (Right) $\mathbb{D}$-linearity of $s$ is equivalent to $a_j = b_j, \alpha_j = \beta_j$ for all $i$. Thus $s= \sum_j s_j a_j + \sigma_j \alpha_j$, proving that the $s_j, \sigma_j$ span $V^\vee$ over $\mathcal{O}_B$. The $\mathcal{O}_B$-linear independence of the $z_j + \theta \zeta_j, \zeta_j + \theta z_j$ follows immediately from that of the $z_j, \zeta_j$.

Let $U_i$ be one of the affine open cells covering $\mathbb{P}_{\Pi, B}(V)$. We claim that $s_i, \sigma_i$ span $\mathcal{O}_\Pi(1)(U_i)$ over $\mathcal{O}_{\mathbb{P}_\Pi(V)}(U_i)$. From the identities:

\begin{align*}
&z_j + \theta \zeta_j = (z_i + \theta \zeta_i)[(z_i + \theta \zeta_i)^{-1} (z_j + \theta \zeta_j)]\\
&\zeta_j + \theta z_j = (z_i + \theta \zeta_i)[(z_i + \theta \zeta_i)^{-1} (\zeta_j + \theta z_j)]
\end{align*}

\noindent one sees that:

\begin{align*}
s_j = s_i \bigg( \frac {z_j} {z_i} - \frac {\zeta_i \zeta_j} {z^2_i} \bigg) + \sigma_i \bigg(\frac {\zeta_j} {z_i} - \frac {\zeta_i z_j} {z^2_i}\bigg)\\
\sigma_j = s_i \bigg(\frac {\zeta_j} {z_i} - \frac {\zeta_i z_j} {z^2_i}\bigg) + \sigma_i \bigg( \frac {z_j} {z_i} - \frac {\zeta_i \zeta_j} {z^2_i} \bigg)
\end{align*}

\bigskip

\noindent for any $j \neq i$, and since we have shown that $s_j, \sigma_j$, $j = 0, \dotsc, n$ span $V^\vee$ over $\mathcal{O}_B$, we have proven the claim.

Further, we claim $s_i, \sigma_i$ are $\mathcal{O}_{\mathbb{P}_{\Pi, B}(V)}$-independent on $U_i$. For suppose $s_i \cdot a + \sigma_i \cdot \alpha = 0$ for some $a, \alpha \in \mathcal{O}_{\mathbb{P}_\Pi(V)}(U_i)$. This is equivalent to the system of equations:

\begin{equation} \label{eqn:linindep}
\begin{cases}
z_i a + \zeta_i \alpha = 0\\
\zeta_i a + z_i \alpha = 0. 
\end{cases}
\end{equation}

\medskip

Since $z_i$ is invertible on $U_i$, we see that $a = -\zeta_i \alpha/ z_i$ from \ref{eqn:linindep}. Substituting this expression for $a$ into \ref{eqn:linindep}, we find that $z_i \alpha = 0$, but by invertibility of $z_i$, $\alpha = 0$. Consequently $a = 0$ as well.

We have shown that $s_i, \sigma_i$ form a basis of $\mathcal{O}_\Pi(1)(U_i)$, hence $\mathcal{O}_\Pi(1)$ is a locally free rank $1|1$ sheaf.
\end{proof}

\medskip

\noindent {\bf Remark.} As a consequence of this proof, we obtain a particularly nice trivialization of $\mathcal{O}_\Pi(1)$ as a $\Pi$-invertible sheaf. In each $U_i$, $s_i, \sigma_i$ form a $\Pi$-symmetric basis, and

\begin{align*}
s_j &= s_i \cdot w^j_i + \sigma_i \cdot \eta^j_i\\
\sigma_j &= s_i \cdot \eta^j_i + \sigma_i \cdot w^j_i
\end{align*}

\bigskip

We thus see that the transition functions for $\mathcal{O}_\Pi(1)(U_i \cap U_j)$ are the matrix:

\begin{align*}
\left( \begin{array}{c|c}
w^j_i & \eta^j_i \\
\hline
\eta^j_i & w^j_i
\end{array}
\right)
\end{align*} 

\bigskip

We note that this matrix lies in $\mathbb{G}^{1|1}_m(U_i \cap U_j)$, as it must. 

Now we characterize the global sections of $\mathcal{O}_\Pi(1)$, assuming $B$ is affine and $V$ is trivial on $B$:

\begin{prop}
If $B = Spec(A)$ is an affine $k$-superscheme, and $V$ a free $\mathbb{D}_A$-bimodule, then $H^0(\mathbb{P}_\Pi(\widetilde{V}), \mathcal{O}_\Pi(1)) = V^\vee$.
\end{prop}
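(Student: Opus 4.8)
The plan is to exploit the affine open cover $\{U_i\}$ of $\mathbb{P}_\Pi(\widetilde V)$ constructed above, together with the explicit $\Pi$-symmetric trivializations $\{s_i \mid \sigma_i\}$ of $\mathcal{O}_\Pi(1)$ on each $U_i$ and the transition matrices $\left(\begin{smallmatrix} w^j_i & \eta^j_i \\ \eta^j_i & w^j_i \end{smallmatrix}\right)$ recorded in the preceding Remark. A global section $s$ of $\mathcal{O}_\Pi(1)$ is, by the sheaf axiom, a tuple of local sections $s^{(i)} \in \mathcal{O}_\Pi(1)(U_i)$ that agree on the overlaps $U_i \cap U_j$. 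Writing $s^{(i)} = s_i \cdot a^{(i)} + \sigma_i \cdot \alpha^{(i)}$ with $a^{(i)}, \alpha^{(i)} \in \mathcal{O}_{\mathbb{P}_\Pi(\widetilde V)}(U_i)$, the compatibility condition is exactly the cocycle-matching identity $\binom{a^{(i)}}{\alpha^{(i)}} = \left(\begin{smallmatrix} w^j_i & \eta^j_i \\ \eta^j_i & w^j_i \end{smallmatrix}\right)\binom{a^{(j)}}{\alpha^{(j)}}$ on $U_i \cap U_j$. On the other side, each element of $V^\vee$ restricts (via the structure morphism $p$) to a global section of $\mathcal{O}_\Pi(1)$ since $\mathcal{O}_\Pi(1) = p^*(V^\vee)$; I want to show this restriction map $V^\vee \to H^0(\mathbb{P}_\Pi(\widetilde V), \mathcal{O}_\Pi(1))$ is a bijection.

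First I would define the candidate map: given the $B$-basis $s_j = z_j + \theta\zeta_j$, $\sigma_j = \zeta_j + \theta z_j$ ($j = 0,\dots,n$) of $V^\vee$ established in the previous proposition, send $s_j$ and $\sigma_j$ to the global sections whose restriction to $U_i$ is $s_i\cdot w^j_i + \sigma_i\cdot\eta^j_i$ and $s_i\cdot\eta^j_i + \sigma_i\cdot w^j_i$ respectively — extending $A$-linearly. The Remark's identities show these restrictions are genuinely compatible on overlaps (the transition matrices intertwine them), so the map is well-defined into $H^0$, and it is $A$-linear. Injectivity is immediate: if a combination $\sum_j s_j a_j + \sigma_j\alpha_j$ has zero restriction to some $U_i$, then since $s_i,\sigma_i$ are $\mathcal{O}_{\mathbb{P}_\Pi(\widetilde V)}(U_i)$-independent (shown above) and the coefficients $w^j_i,\eta^j_i$ for $j\neq i$ together with $1$ are themselves $A$-algebraically independent generators of $\mathcal{O}_{\mathbb{P}_\Pi(\widetilde V)}(U_i)$ (the Corollary), all $a_j,\alpha_j$ vanish.

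The substance is surjectivity. Given a global section $s$ with local data $(a^{(i)},\alpha^{(i)})$, I would work on a single chart, say $U_0$, and expand $a^{(0)},\alpha^{(0)}$ as polynomials in the generators $w^j_0,\eta^j_0$ over $A$. The claim is that the cocycle conditions with the other charts force all \emph{nonlinear} terms to vanish, so that $a^{(0)} = \sum_j c_j w^j_0 + (\text{odd terms})$ and similarly for $\alpha^{(0)}$ — precisely the form coming from an element of $V^\vee$. Concretely, I would pass to the overlap $U_0 \cap U_i$, rewrite $w^j_0,\eta^j_0$ in terms of $w^k_i,\eta^k_i$ using the multiplicativity of the transition cocycle (which lives in $\mathbb{G}^{1|1}_m$, hence behaves like the $\mathbb{D}^*$-valued functions $z_0/z_i$ up to the $\Pi$-twist), and demand that $a^{(i)},\alpha^{(i)}$ — obtained by applying the inverse transition matrix — again be \emph{polynomial} (not merely rational) in the $w^k_i,\eta^k_i$. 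This no-poles condition is the $\Pi$-projective analogue of the classical fact that $H^0(\mathbb{P}^n,\mathcal{O}(1))$ consists of linear forms, and I expect it to be the main obstacle: one must track how a monomial of degree $d$ in the $U_0$-coordinates acquires a denominator $z_i^{d}$ (up to nilpotent corrections from the $\zeta$'s) after the change of chart, and check degree by degree that $d\geq 2$ is incompatible with regularity on $U_i$ for some $i$. Once the local data on $U_0$ is pinned down to the linear form $\sum_j s_j c_j + \sigma_j \gamma_j$ for unique $c_j,\gamma_j \in A$, the corresponding element $\sum_j s_j c_j + \sigma_j\gamma_j \in V^\vee$ maps to a global section agreeing with $s$ on $U_0$, hence (by the already-proved injectivity on each chart) with $s$ everywhere, completing the proof.
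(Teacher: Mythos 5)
Your strategy (glue local sections over the affine cells $U_i$ and show that only ``linear forms'' in the $s_j,\sigma_j$ satisfy the cocycle condition) is a viable, genuinely different route from the paper's, and your injectivity argument is fine. But as written there is a real gap: the surjectivity step, which is the entire content of the proposition, is only announced, not carried out. You say you ``expect'' the no-poles/degree-counting analysis to work and correctly identify it as the main obstacle, but you never perform it. Concretely, one must use the relations $w^i_0=(w^0_i)^{-1}$, $\eta^i_0=-\eta^0_i(w^0_i)^{-2}$, $w^j_0=w^j_i w^i_0-\eta^j_i\eta^i_0$, $\eta^j_0=w^i_0\eta^j_i+\eta^i_0 w^j_i$ to show that if $a^{(0)}$ is a polynomial of degree $d$ in the $w^j_0,\eta^j_0$, then $w^0_i a^{(0)}+\eta^0_i\alpha^{(0)}$ acquires a factor $(w^0_i)^{1-d}$ (up to odd corrections), so regularity on $U_i$ forces $d\le 1$; the odd corrections coming from the $\eta$'s and the fact that $A$ may have nilpotents mean this bookkeeping is not automatic and must actually be checked. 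Until that computation is done, the proposition is not proved.

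For comparison, the paper sidesteps the chart-by-chart degree analysis entirely: since $\mathcal{O}_\Pi(1)=p^*(V^\vee)$ with $V^\vee$ free over $A$, one has $H^0(\mathbb{P}_\Pi(\widetilde V),\mathcal{O}_\Pi(1))=V^\vee\otimes_A\Gamma(\mathcal{O}_{\mathbb{P}_\Pi(V)})$, and the whole problem reduces to showing $\Gamma(\mathcal{O}_{\mathbb{P}_\Pi(V)})=A$. That in turn is done by a Hartogs-type extension argument on the punctured cone $\underline V\setminus\{0\}$ (every regular function extends to a polynomial on $\underline V$ when the $\mathbb{D}$-rank exceeds $1$), followed by the observation that a $\mathbb{D}^*$-invariant polynomial is constant, with the rank-one case handled by a direct Laurent-polynomial calculation. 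If you prefer your Čech approach, the missing degree argument is exactly the step you would need to supply; alternatively, adopting the paper's reduction to $\Gamma(\mathcal{O})=A$ would let you avoid it.
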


\begin{proof}
By definition $H^0(\mathbb{P}_\Pi(\widetilde{V}), \mathcal{O}_\Pi(1)) = V^\vee \otimes \Gamma(\mathcal{O}_{\mathbb{P}_\Pi(V)})$. So we only need show that $\Gamma(\mathcal{O}_{\mathbb{P}_\Pi(V)}) = A$.

First we consider the case where the $\mathbb{D}$-rank of $V$ is larger than $1$. We claim any function on $\underline{V} \backslash \{0\}$ extends uniquely to $\underline{V}$, thus is the restriction of a unique polynomial on $\underline{V}$. This should follow from super analogues of standard Hartogs'-lemma-like results in algebraic geometry, which we shall neither attempt to formulate nor prove. Instead, we give a direct argument.

Let $f$ be a function on $\underline{V} \backslash \{0\}$. The open affine subsets $U_i = \{ z_i \neq 0 \}$ cover $\underline{V} \backslash \{0\}$. Then $f|_{U_i} = P_i/z_i^{k_i}$ where $P_i$ is a polynomial, $k_i \geq 0$; we may assume for all $i$ that $z_i$ does not divide $P_i$. On the intersection $U_i \cap U_j$, $f_i|_{U_j} = f_j|_{U_i}$ if and only if $z_i^{k_i}P_j = z_j^{k_j}P_i$ in the polynomial ring $\underline{Sym}(V^*)$. If $k_i  > 0$,  we see from this equation that $z_j^{k_j}P_i$, hence $P_i$, is divisible by $z_i$. This contradicts the assumption that $z_i$ does not divide $P_i$. Hence $k_i = 0$. By the same argument $k_j = 0$, so $f|_{U_i} = P_i, f|_{U_j} = P_j$. Hence $P_i = P_j$ for all $i, j$. We conclude that $f$ extends to a polynomial on $\underline{V}$, given by $P_i$ for any $i$. This proves the uniqueness as well.

It is routine to check that any $\mathbb{D}^*$-invariant polynomial on $\underline{V}$ is in fact constant (indeed, it is enough to consider the action of the even subsupergroup $\mathbb{G}^{1|0}_m$.) Hence the proposition is proven in this case.

If the $\mathbb{D}$-rank of $V$ is $1$, $\underline{V}\backslash\{0\}$ is an affine supervariety with coordinate ring $A[z, z^{-1}, \zeta]$, on which $\mathbb{D}^*$ acts by the formula given in equation \ref{eqn:actionincoordinates}. We leave it to the reader to show, by direct calculation, that any $\mathbb{D}^*$-invariant Laurent polynomial in $z, \zeta$ is in fact constant.
\end{proof}

\subsection{Morphisms into $\mathbb{P}^n_{\Pi, B}$.}

We continue to assume that $B$ is an affine $k$-superscheme and that $E$ is a trivial sheaf of $\mathbb{D}$-modules on $B$. We have the following characterization of morphisms into $\mathbb{P}^n_{\Pi, B}$. 

\begin{thm}\label{thm:morphismsaff}
Let $B$ be an affine $k$-superscheme, and let $X \to B$ be a $B$-superscheme. If $f: X \to \mathbb{P}^n_{\Pi, B}$ is a $B$-morphism, $(f^*(\mathcal{O}_\Pi(1)), f^*(\Phi))$ is a $\Pi$-invertible sheaf on $X$, and the global sections $f^*(z_i + \zeta_i \theta), f^*(-\zeta_i + z_i \theta)$ globally generate $f^*(\mathcal{O}_\Pi(1))$. Conversely, given a $\Pi$-invertible sheaf $(S, \phi)$ on $X \to B$ and a $\Pi$-symmetric set of global sections $\{s_0, \dotsc, s_n | \\ \sigma_0, \dotsc, \sigma_n\}$ of $S$ which globally generate $S$, there exists a unique $B$-morphism $f: X \to \mathbb{P}^n_{\Pi, B}$ such that $(f^*(\mathcal{O}_\Pi(1)), f^*(\Phi)) \cong (S, \phi)$ and $f^*(z_i + \zeta_i \theta) = s_i, f^*(-\zeta_i + z_i \theta) = \sigma_i$.
\end{thm}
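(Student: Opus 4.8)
The plan is to treat the two directions separately and to reduce everything to affine computations, since both $X$ and $\mathbb{P}^n_{\Pi,B}$ are covered by affine pieces and $B$ is affine.

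For the first (easy) direction, suppose $f : X \to \mathbb{P}^n_{\Pi,B}$ is a $B$-morphism. Since $\mathcal{O}_\Pi(1)$ is a $\Pi$-invertible sheaf (by the previous proposition) and pullback along $f$ preserves local freeness of rank $1|1$ and carries the odd involution $\Phi$ to an odd involution $f^*(\Phi)$, the pair $(f^*(\mathcal{O}_\Pi(1)), f^*(\Phi))$ is again a $\Pi$-invertible sheaf. The sections $z_i + \zeta_i\theta$ and $-\zeta_i + z_i\theta$ generate $\mathcal{O}_\Pi(1)$ on the cover $\{U_i\}$ (this is exactly the content of the trivialization established in the Remark following the previous proposition, up to the sign convention in the odd generators); since $f$ maps $X$ into $\bigcup_i U_i$ and generation is preserved by pullback, their $f$-pullbacks globally generate $f^*(\mathcal{O}_\Pi(1))$. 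This direction is essentially formal.

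For the converse (the substance of the theorem), I would construct $f$ by gluing morphisms on an open cover of $X$. Given $(S,\phi)$ and the $\Pi$-symmetric generating set $\{s_j \,|\, \sigma_j\}$, set $X_i \subseteq X$ to be the open locus where $s_i$ generates $S$ as a $\underline{\mathbb{D}}_X$-module — equivalently, where the coefficient of $s_i$ in the expansion of any generator is invertible; because the $s_j,\sigma_j$ globally generate $S$ and $S$ has $\mathbb{D}$-rank $1$, the $X_i$ cover $X$. On $X_i$ one has $s_j = s_i \cdot w^j_i + \sigma_i \cdot \eta^j_i$ and $\sigma_j = s_i \cdot \eta^j_i + \sigma_i \cdot w^j_i$ for unique sections $w^j_i, \eta^j_i \in \mathcal{O}_X(X_i)$, with $w^j_i$ even and $\eta^j_i$ odd. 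Since $\mathcal{O}_{\mathbb{P}^n_\Pi}(U_i)$ is the free commutative $A$-superalgebra on the $n|n$ variables $w^j_i, \eta^j_i$ (by the Corollary), sending $w^j_i \mapsto w^j_i$ and $\eta^j_i \mapsto \eta^j_i$ defines a unique $A$-algebra homomorphism, hence a unique $B$-morphism $f_i : X_i \to U_i \hookrightarrow \mathbb{P}^n_{\Pi,B}$. One then checks that $f_i$ and $f_j$ agree on $X_i \cap X_j$: this is a computation in $\mathcal{O}_X(X_i \cap X_j)$ showing that the two descriptions of the sections $w^k_\bullet, \eta^k_\bullet$ are related by the same transition formulas as on $\mathbb{P}^n_\Pi$, which follows by manipulating the identities $s_k = s_i \cdot w^k_i + \sigma_i \cdot \eta^k_i = s_j \cdot w^k_j + \sigma_j \cdot \eta^k_j$ together with the expression of $s_j,\sigma_j$ in terms of $s_i,\sigma_i$ and the invertibility of the relevant coefficient. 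The $f_i$ then glue to a morphism $f : X \to \mathbb{P}^n_{\Pi,B}$.

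It remains to identify the pullback data and to prove uniqueness. By construction $f^*$ pulls back the local generators $s_i|_{U_i}, \sigma_i|_{U_i}$ of $\mathcal{O}_\Pi(1)$ (which are $z_i + \theta\zeta_i$ and $\zeta_i + \theta z_i$ in the earlier notation) to $s_i, \sigma_i$, and the transition matrices of $\mathcal{O}_\Pi(1)$ — displayed in the Remark as the $\mathbb{G}^{1|1}_m$-valued matrix with entries $w^j_i, \eta^j_i$ — pull back to the transition matrices of $(S,\phi)$; hence $(f^*\mathcal{O}_\Pi(1), f^*\Phi) \cong (S,\phi)$ compatibly with the prescribed sections. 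For uniqueness, any $B$-morphism $g$ with $g^*(z_i + \zeta_i\theta) = s_i$ and $g^*(-\zeta_i + z_i\theta) = \sigma_i$ must carry $X$ into $U_i$ exactly over the locus where $s_i$ generates $g^*\mathcal{O}_\Pi(1)$, i.e.\ over $X_i$, and on $X_i$ the composite $X_i \to U_i$ is forced, since $\mathcal{O}_{\mathbb{P}^n_\Pi}(U_i)$ is generated by the $w^j_i, \eta^j_i$ and their images are determined by the expansions of the $s_j,\sigma_j$ in the pulled-back $\Pi$-symmetric basis; so $g$ agrees with $f$ on each $X_i$, hence $g = f$.

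The main obstacle I expect is the gluing/cocycle verification on $X_i \cap X_j$: one must show the two systems of generators $w^k_i, \eta^k_i$ and $w^k_j, \eta^k_j$ are related on the overlap by precisely the same algebraic identities that hold among the corresponding rational functions on $\mathbb{P}^n_\Pi$, so that the $f_i$ patch. This is not conceptually hard but requires care with the noncommutative $\mathbb{D}$-coefficients and the parity signs in the expansions $s = s_i a + \sigma_i \alpha$; everything else is bookkeeping with the free-algebra presentation of $\mathcal{O}_{\mathbb{P}^n_\Pi}(U_i)$ from the Corollary and the $\Pi$-symmetric trivialization of $\mathcal{O}_\Pi(1)$ from the Remark.
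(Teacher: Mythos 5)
Your proposal is correct and follows essentially the same route as the paper: cover $X$ by the loci $X_i$ where $s_i$ generates, use the free presentation of $\mathcal{O}_{\mathbb{P}^n_\Pi}(U_i)$ on the $w^j_i,\eta^j_i$ to define $f_i$, glue via the cocycle identities, and deduce the pullback identification and uniqueness. The only (cosmetic) difference is that you read off $w^j_i,\eta^j_i$ by expanding $s_j$ directly in the $\Pi$-symmetric basis $\{s_i\,|\,\sigma_i\}$, whereas the paper computes $s_i^{-1}s_j$ by division in $\underline{\mathbb{D}}$ in an auxiliary local basis and then checks independence of that choice; the two are equivalent.
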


\begin{proof} Suppose $f: X \to \mathbb{P}^n_{\Pi, B}$ is a $B$-morphism. Then $(f^*(O_\Pi(1), f^*(\Phi))$ is a $\Pi$-invertible sheaf.

Conversely, suppose given a $\Pi$-invertible sheaf $(S, \phi)$ and a set of sections $\{s_i | \sigma_i\}_{i =0, \dotsc n}$ as given above. Clearly, the $\sigma_i$ can be recovered from the $s_i$ via $\phi$.

Let $X_i$ denote the open subset of $|X|$:

\begin{align*}
X_i = \{x \in |X| : (s_i)_x \notin \mathfrak{M}_x S_x\}.
\end{align*}

\medskip

This is an open subsuperscheme of $X$, which we also denote by $X_i$. By the hypothesis that the $s_i, t_i$ generate $S$, the $X_i$ form a cover of $X$.

Let $V$ be an open subsuperscheme of $X_i$ such that $\mathcal{O}_S$ is trivial on $V$, and let $\{e | f\}$ denote a $\Pi$-symmetric basis of $\mathcal{O}_S(V)$. Suppose that in this basis, $s_i, s_j$ are given by:

\begin{align*}
&s_i =  e a_i + f \alpha_i\\
&s_j =  e a_j + f \alpha_j.
\end{align*}

\medskip

We now define an (even) local section $s_i^{-1} \cdot s_j$ of $\mathcal{O}_X^{1|1}$ over $X_i$ as follows. Identifying $\mathcal{O}_S(V)$ with $\mathbb{D}(V)$ via the isomorphism $e \mapsto 1, f \mapsto \theta$, we can identify $s_i, s_j$ with sections $\tilde{s}_i := a_i + \theta \alpha_i, \tilde{s}_j := a_j + \theta \alpha_j$ of $\mathbb{D}(V)$. Now we make use of the arithmetic operations in the super skew algebra $\underline{\mathbb{D}}(V)$:

\begin{align*}
\tilde{s}_i^{-1} \tilde{s}_j &= (a_i + \theta \cdot \alpha_i)^{-1}(a_j + \theta \cdot \alpha_j)\\
&= \left[ \frac {a_j} {a_i} - \frac {\alpha_i \alpha_j} {a_i^2} \right] + \theta \left[\frac {\alpha_j} {a_i} - \frac {a_j \alpha_i} {a_i^2} \right]
\end{align*}

\bigskip

\noindent and then take the coefficients of $1$ and $\theta$ respectively as the components of $s_i^{-1} s_j$:

\begin{align*}
s_i^{-1}  s_j := &\left( \begin{array}{c}
\frac {a_j} {a_i} - \frac {\alpha_i \alpha_j} {a_i^2} \\
\hline
\frac {\alpha_j} {a_i} - \frac {a_j \alpha_i} {a_i^2}
\end{array} \right).
\end{align*}

\medskip

All functions involved are regular, since $a_i$ is invertible in $X_i$ by hypothesis. Now we check that $s_i^{-1}  s_j$ is independent of the $\Pi$-symmetric basis chosen and hence is well-defined. Suppose we have a change of $\Pi$-symmetric basis:

\begin{align*}
(e \, | f) = (e' | f')
\left( \begin{array}{c|c}
b & \beta\\
\hline
\beta & b
\end{array}
\right).
\end{align*}

\medskip

Then, identifying $\mathcal{O}_S(V)$ with $\underline{\mathbb{D}}(V)$ in this new basis, we have $\tilde{s}'_i = (b + \theta \beta) (a_j + \theta \alpha_j),  \tilde{s}'_j = (b + \theta \beta) (a_j + \theta \alpha_j)$, from which it follows that:

\begin{align*}
(\tilde{s}_i')^{-1} \cdot \tilde{s}'_j &= (a_i + \theta \alpha_i)^{-1} (b + \theta \beta)^{-1} (b + \theta \beta)(a_j + \theta \alpha_j)\\
& =\tilde{s}_i^{-1} \cdot \tilde{s}_j 
\end{align*}

\medskip

\noindent hence $(s'_i)^{-1} s'_j = s_i^{-1} s_j$. As the functions $w^j_i, \eta^j_i$ freely generate the $A$-superalgebra $\mathcal{O}_{U_i}$, we have a well-defined $A$-morphism  $f_i: X_i \to U_i$, given by setting:

\begin{align*}
&f^*_i(w^j_i) = (s_i^{-1} s_j)_0\\
&f^*_i(\eta^j_i) = (s_i^{-1} s_j)_1\\
\end{align*}

One may verify by direct calculation that the following four equalities hold in $\mathcal{O}_{\mathbb{P}^n_{\Pi, B}}(U_i \cap U_j)$:

\medskip

\begin{enumerate}
\item $w^i_j = (w^j_i)^{-1}$
\item $\eta^i_j = -\eta^j_i (w^j_i)^{-2}$
\item $w^k_j = w^k_i w^i_j - \eta^k_i \eta^i_j$  \hspace{2.2mm}  (for $k \neq i$)
\item $\eta^k_j = w^i_j \eta^k_i + \eta^i_j w^k_i$ \hspace{3mm}  (for $k \neq i$)
\end{enumerate}

\bigskip

The verification that the functions $(s_i^{-1} s_j)_0, (s_i^{-1} s_j)_1$ also satisfy the equalities 1) -- 4) is a completely formal matter of replacing $z_i$ with $a_i$, $\zeta_i$ with $\alpha_i$ etc. in the calculations just given. Hence it follows that $f_i|_{U_i \cap U_j} = f_j|_{U_i \cap U_j}$, and by standard arguments, the morphisms $\{f_i\}$ glue together into a morphism $f: X \to \mathbb{P}^n_\Pi$. 

One sees that by the construction of $f$, $f^*(g_{ij}) = h_{ij}$, where $g_{ij}$ are the transition functions of $\mathcal{O}_\Pi$ computed in the previous remark, and $h_{ij}$ are the transition functions of $S$ on the cover $X_i$. Hence $f^*(\mathcal{O}_\Pi(1)) \cong S$. Similarly, one checks immediately that $f^*(z_j + \zeta_j \theta) = s_j, f^*(-\zeta_j + z_j \theta) = \sigma_j$, and that $f^*(\Phi) = \phi$ (the last follows from the $\Pi$-symmetry of $s_i, t_i$ for all $i$).

The uniqueness statement in the proposition follows, since any morphism $f': X \to \mathbb{P}^n_{\Pi, B}$ which satisfies the conditions of the theorem must agree with $f$ on each $U_i$, hence must equal $f$.

\end{proof}

\section{Product structure on $\Pi$-invertible sheaves} Working in the category of complex supermanifolds, Voronov Manin and Penkov \cite{VMP} define a notion of a {\it composition} of an ordered pair of $\Pi$-invertible sheaves. The result of this composition is not a $\Pi$-invertible sheaf, but rather a $1|1$ locally free sheaf, and its significance is therefore somewhat obscured. We shall clarify matters by using the algebra of the super skew field $\mathbb{D}$ to define a product operation on ordered pairs of $\Pi$-invertible sheaves (which takes values in $1|1$ locally free sheaves), and then showing that our product is the same as the composition of Voronov, Manin, and Penkov.

\noindent %In the sequel we will denote an $(A, B)$ bimodule action by $\cdot_A$ and $\cdot_B$, and a left $A \otimes B^o$-module action by $\cdot$.

\subsection{Super Morita theory} Let $A$, $B$ be super rings with unit (not necessarily supercommutative). Note that an $(A,B)$-bimodule is the same thing as a left $A \otimes B^o$-module by the following recipe: if $M$ is a left $A \otimes B^o$ module, we define an $(A, B)$ bimodule structure on $M$ by:

\begin{align*}
a  \cdot_A \, m \cdot_B b := (-1)^{|b||m|}(a \otimes b) \cdot m
\end{align*}

\medskip

where $a \in A$, $b \in B$, and $m \in M$ are all homogeneous. Conversely, if $M$ is an $(A, B)$-bimodule, we may define a left $A \otimes B^o$-module structure on $M$ using the same formula. It is readily seen that these correspondences are compatible with morphisms in the respective categories, hence define a category equivalence between the category of $(A,B)$-bimodules and that of left $A \otimes B^o$-modules. 

From now on, we assume that $A$ is an $R$-superalgebra, with $R$ supercommutative.

\begin{defn}
Let $M$ be an $(A,A)$-bimodule. The {\it supercommutant} of $M$ is the $R$-module $M^A$ generated by the set:

\begin{equation*}
\{m \in M : a m = (-1)^{|a||m|} ma, m \text{ homogeneous} \}
\end{equation*}
\end{defn}

\medskip

Equivalently, interpreting $M$ as a left $A \otimes A^o$-module, we see that $M^A$ may be defined in terms of the $A \otimes A^o$-action as the $R$-module $M^A$ generated by the set:

\begin{equation*}
\{m \in M : (a \otimes 1) \cdot m = (1 \otimes a) \cdot m, m \text{ homogeneous}\}
\end{equation*}

\medskip

For brevity we will denote the superalgebra $A \otimes_R A^o$ by $A^e$.

We will need the following theorem from the Morita theory of super rings, proven in \cite{Kwok}.

\begin{thm}\label{thm:supermorita}
Let $R$ be a supercommutative ring, and suppose $A$ is a super Azumaya algebra over $R$. Then $V \mapsto A \otimes_R V: \mathfrak{M}_R \to {}_A\mathfrak{M}_A$ and $W \mapsto W^A: {}_A \mathfrak{M}_A \to {}_R\mathfrak{M}$ are mutually inverse category equivalences.
\end{thm}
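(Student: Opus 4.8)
The plan is to verify that the two functors $F: V \mapsto A \otimes_R V$ and $G: W \mapsto W^A$ are pseudo-inverse by exhibiting natural isomorphisms $G \circ F \cong \mathrm{id}_{\mathfrak{M}_R}$ and $F \circ G \cong \mathrm{id}_{{}_A\mathfrak{M}_A}$. Throughout I would pass freely between $(A,A)$-bimodules and left $A^e$-modules, $A^e = A \otimes_R A^o$, via the equivalence recorded just above the statement; under this identification $W^A$ is the $R$-submodule of $W$ on which $a \otimes 1$ and $1 \otimes a$ agree for all homogeneous $a \in A$, and the key structural input is that, since $A$ is super Azumaya over $R$, the homomorphism $\psi: A^e \to \underline{\mathrm{End}}_R(A)$ is an isomorphism of $R$-superalgebras. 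Thus $A$, viewed as the tautological left $A^e$-module, is identified with the module $\underline{\mathrm{End}}_R(A)$ acting on itself by left multiplication; this is the engine that drives everything.

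First I would treat $G \circ F \cong \mathrm{id}$. For an $R$-module $V$, the bimodule $A \otimes_R V$ corresponds to the left $A^e$-module $A \otimes_R V$ with $A^e = \underline{\mathrm{End}}_R(A)$ acting only on the $A$ factor. Hence $(A \otimes_R V)^A$ is the set of elements fixed (in the appropriate graded sense) by all of $\underline{\mathrm{End}}_R(A)$ acting on the first factor, and a short computation — choosing a homogeneous $R$-basis of $A$, or invoking that the only $\underline{\mathrm{End}}_R(A)$-stable ``vectors'' are the scalars — shows this is exactly $1 \otimes V \cong V$. Naturality in $V$ is immediate. The unit map $V \to (A \otimes_R V)^A$, $v \mapsto 1 \otimes v$, is the natural isomorphism. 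Here one uses that $A$ is a faithful finitely generated projective $R$-module so that $A \otimes_R -$ is exact and well-behaved; the identification of the supercommutant with $1 \otimes V$ is the place where $\psi$ being an isomorphism (not merely injective) is used.

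For $F \circ G \cong \mathrm{id}$, the counit is the action map $\mu_W: A \otimes_R W^A \to W$, $a \otimes w \mapsto (a \otimes 1)\cdot w$, which lands in $W$ and is a morphism of $A^e$-modules once one checks $A^e$-linearity using that elements of $W^A$ satisfy $(a \otimes 1)\cdot w = (1 \otimes a)\cdot w$. To see $\mu_W$ is an isomorphism for every $W$, I would first verify it for the free modules $W = A^e \otimes_R V \cong A \otimes_R V$ (where it reduces to the previous paragraph), then extend to direct summands, hence to all finitely generated projective $A^e$-modules, and finally to arbitrary $A^e$-modules by writing $W$ as a cokernel of a map of free $A^e$-modules and using right-exactness of both functors together with the five lemma — crucially, $G = (-)^A$ is exact here because, $A^e$ being super Azumaya (a tensor product $A \otimes_R A^o$ of super Azumaya algebras, or directly $\cong \underline{\mathrm{End}}_R(A)$), the functor $W \mapsto W^A$ is naturally isomorphic to $\underline{\mathrm{Hom}}_{A^e}(A, W)$ and $A$ is a projective $A^e$-module (a direct summand of a free one, since $\psi$ identifies $A$ with $\underline{\mathrm{End}}_R(A)$, which is Morita-equivalent to $R$). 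I expect the main obstacle to be precisely this last point: setting up the identification $W^A \cong \underline{\mathrm{Hom}}_{A^e}(A,W)$ with the correct sign conventions and confirming projectivity of $A$ over $A^e$ cleanly, since the graded Hom and the $A\otimes A^o$ multiplication introduce Koszul signs at every step; once that is in place, exactness of $G$ and the reduction to the free case make the rest formal.
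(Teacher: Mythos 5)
The paper does not prove this theorem at all: it is imported verbatim from \cite{Kwok} (``Super Morita theory''), so there is no in-text argument to compare yours against. Your outline is the standard Morita-theoretic proof --- pass to left $A^e$-modules, use $\psi\colon A^e \xrightarrow{\ \sim\ } \underline{End}_R(A)$ together with the fact that a faithful finitely generated projective module over a supercommutative ring is a progenerator, and recognize $(-)^A$ as $\underline{Hom}_{A^e}(A,-)$ --- and it is correct in structure; it is almost certainly the shape of the argument in the cited reference.

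Three points need tightening. First, in computing $(A\otimes_R V)^A$ you offer ``choose a homogeneous $R$-basis of $A$'' as one option: that is not available, since $A$ is only projective over $R$, not free. The viable route is the one you give second, but note that reducing from general $V$ to $V=R$ (where the claim is $A^A = Z_s(A)=R$, i.e.\ centrality) already uses that $(-)^A \cong \underline{Hom}_{A^e}(A,-)$ commutes with $-\otimes_R V$, which rests on $A$ being finitely generated projective over $A^e$; so the two halves of your argument are not independent, and that projectivity should be established first. Second, your justification of it (``$\psi$ identifies $A$ with $\underline{End}_R(A)$'') is a slip --- $\psi$ identifies $A^e$ with $\underline{End}_R(A)$; the correct reason $A$ is a direct summand of a free $A^e$-module is that faithfulness makes $A$ an $R$-generator, so there are $f_j\in A^\vee$, $a_j\in A$ with $\sum_j f_j(a_j)=1$, and the maps $A\to (A\otimes_R A^\vee)^n$, $a\mapsto (a\otimes f_j)_j$, and $(A\otimes_R A^\vee)^n\to A$, $(\phi_j)_j\mapsto \sum_j \phi_j(a_j)$, split $A$ off $\underline{End}_R(A)^n\cong (A^e)^n$. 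Third, the identification ``$W=A^e\otimes_R V\cong A\otimes_R V$'' cannot be literally right (the $R$-ranks differ); what you need is that the free rank-one left $A^e$-module is, as an $(A,A)$-bimodule, $\underline{End}_R(A)\cong A\otimes_R A^\vee = F(A^\vee)$ with $A^\vee:=\underline{Hom}_R(A,R)$ carrying the trivial action --- still an object in the image of $F$, so your reduction from free to projective to arbitrary modules via right-exactness and the five lemma goes through. The Koszul-sign bookkeeping you flag in $W^A\cong \underline{Hom}_{A^e}(A,W)$ (evaluation at $1\in A$) is routine once the paper's bimodule-to-$A^e$ dictionary is fixed.
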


\subsection{Definition of the product}

Let $A/R$ be an $R$-superalgebra, $R$ supercommutative. We begin by noting that if $M$ is a $A$-module, $N$ a right $A$-module, then $M \otimes_R N$ is a $(A, A)$ bimodule via the formula:

\begin{align*}
a_1 \cdot (m \otimes n) \cdot a_2 := (a_1 \cdot m) \otimes (n \cdot a_2).
\end{align*}

\medskip

Theorem \ref{thm:supermorita} tells us that, given a sheaf of $(\underline{\mathbb{D}}_X, \underline{\mathbb{D}}_X)$-bimodules $E$, there corresponds in a natural way a sheaf of $\mathcal{O}_X$-modules given by the supercommutant sheaf $E^{\underline{\mathbb{D}}_X}$. We shall define our product via this correspondence. We begin with the following

%Let $M$ be an $R$-module. By convention, we consider all modules as right $R$-modules, so that we can compose $R$-linear transformations in the usual order, from left to right. Under these conventions, in order for scalar multiplication by $r \in R$ to yield a homomorphism from $R$ to $End_R(M)$, we must let $R$ act by {\it left} multiplication. This point understood, we may now state the following:

\begin{prop}\label{supercommutantfree}
Let $R$ be a commutative $k$-superalgebra, $M$ a free left $\mathbb{D}_R$-module of rank $1$, and $N$ be a free right $\mathbb{D}_R$-module of rank $1$ (hence $M, N$ are free $R$-modules of rank $1|1$). Then the supercommutant $(M \otimes_R N)^{\mathbb{D}_R}$ is a free $R$-module of rank $1|1$.
\end{prop}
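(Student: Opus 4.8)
The plan is to compute $(M \otimes_R N)^{\mathbb{D}_R}$ explicitly by choosing convenient bases. By Proposition \ref{prop:canonicalbasis} and its left-module analogue (the Remark following it), I may fix an isomorphism $N \cong \mathbb{D}_R$ of right $\mathbb{D}_R$-modules and an isomorphism $M \cong \mathbb{D}_R$ of left $\mathbb{D}_R$-modules. So it suffices to treat the case $M = N = \mathbb{D}_R$, with $M$ carrying the left regular action and $N$ the right regular action. Then $M \otimes_R N = \mathbb{D}_R \otimes_R \mathbb{D}_R$ is free of rank $2|2$ over $R$ with homogeneous $R$-basis $\{1\otimes 1,\ \theta \otimes \theta\ |\ 1 \otimes \theta,\ \theta \otimes 1\}$, and the $(\mathbb{D}_R, \mathbb{D}_R)$-bimodule structure is exactly $a_1 \cdot (x \otimes y) \cdot a_2 = (a_1 x) \otimes (y a_2)$, i.e. left multiplication in the first tensor factor and right multiplication in the second.

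Next I would translate the supercommutant condition into linear algebra. Interpreting $M \otimes_R N$ as a left $\mathbb{D}_R^e = \mathbb{D}_R \otimes_R \mathbb{D}_R^o$-module, an element $w$ lies in the supercommutant iff $(\theta \otimes 1)\cdot w = (1 \otimes \theta)\cdot w$ (it suffices to test on the algebra generator $\theta$, since the condition for $1$ is automatic and the supercommutant is defined as the $R$-span of homogeneous solutions). Under the identifications above, $(\theta\otimes 1)$ acts by left multiplication by $\theta$ on the first factor, and $(1\otimes\theta)$ acts (with the Koszul sign built into the $(A,A)$-bimodule-to-$A^e$-module dictionary) by right multiplication by $\theta$ on the second factor. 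Writing a general element as $w = (1\otimes 1)r_1 + (\theta\otimes\theta)r_2 + (1\otimes\theta)\rho_1 + (\theta\otimes 1)\rho_2$ and using $\theta^2 = -1$, the equation $(\theta\otimes 1)w = (1\otimes\theta)w$ becomes a system of four $R$-linear equations in $r_1, r_2, \rho_1, \rho_2$. Solving it (this is a short computation, analogous to the matrix computation in Proposition \ref{prop:superAzumayaalg}, and uses that $2$ is invertible in $R$) cuts the rank $2|2$ module down to a rank $1|1$ free submodule; concretely I expect the even generator to be proportional to $1\otimes 1 - \theta\otimes\theta$ (or $+$, depending on sign conventions) and the odd generator to $1\otimes\theta + \theta\otimes 1$, or similar. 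One then checks these two elements are $R$-linearly independent, which is immediate from linear independence of the basis of $\mathbb{D}_R\otimes_R\mathbb{D}_R$.

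The main obstacle — really the only place where care is needed — is bookkeeping the Koszul signs: both the dictionary between $(A,A)$-bimodules and left $A^e$-modules (which inserts $(-1)^{|b||m|}$) and the multiplication in $A^o$ and in the tensor product $A\otimes_R A^o$ carry signs, and it is easy to get an inconsistent system if these are mishandled. I would therefore carry out the computation directly with the bimodule description $a_1\cdot(x\otimes y)\cdot a_2 = (a_1x)\otimes(ya_2)$ and the original form of the supercommutant condition $am = (-1)^{|a||m|}ma$, rather than passing through $\mathbb{D}_R^e$, so that only the single sign $(-1)^{|a||m|}$ appears. Once the system is set up correctly, the rest is routine: exhibit the two generators, verify they satisfy the defining relation, and verify they form an $R$-basis of the solution space. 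This establishes that $(M\otimes_R N)^{\mathbb{D}_R}$ is free of rank $1|1$ over $R$.
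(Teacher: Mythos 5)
Your proposal is correct and follows essentially the same route as the paper: reduce to $M=N=\mathbb{D}_R$ via the canonical bases (the paper writes them as $\{e\,|\,f\}$, $\{e'\,|\,f'\}$ rather than $\{1\,|\,\theta\}$), impose $\theta\cdot w=w\cdot\theta$ on a general homogeneous element of the rank $2|2$ module, and read off the rank $1|1$ solution space, whose generators the paper takes to be $e\otimes e'-f\otimes f'$ and $e\otimes f'-f\otimes e'$, matching your predicted generators up to the sign you left open. (The resulting linear system only forces $b=-a$ and $\beta=-\alpha$, so invertibility of $2$ is not actually needed at this step, though it holds anyway.)
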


\begin{proof}

By the previous lemma, there exist $R$-module bases $\{e | f\}, \{e' | f'\}$ of $M, N$ respectively such that:

\begin{align*}
&\theta \cdot e = f\\
&\theta \cdot f = -e\\
&e' \cdot \theta = f'\\
&f' \cdot \theta = -e'\\
\end{align*}

Then $\mathcal{B} := \{e \otimes e', f \otimes f' | e \otimes f', f \otimes e'\}$ is an $R$-module basis of $M \otimes N$. We will now compute the homogeneous elements of the supercommutant. For now, suppose $w \in (M \otimes N)^{\mathbb{D}_R}$ is even. Then

\begin{align*}
w = (e \otimes e') a + (f \otimes f') b + (e \otimes f') \alpha + (f \otimes e') \beta
\end{align*}

\medskip

\noindent where $a, b, \alpha, \beta$ are uniquely determined elements of $R$ such that $a, b$ (resp. $\alpha, \beta$) are even (resp. odd).

The assertion that $w \in (M \otimes N)^{\mathbb{D}_R}$ is the same as the equality:

\begin{equation}\label{eq: commutant}
\theta \cdot w = w \cdot \theta.
\end{equation}

\medskip

One checks by direct calculation that (\ref{eq: commutant}) holds if and only if $b = -a, \beta = -\alpha$, so that $w = (e \otimes e' - f \otimes f') a + (e \otimes f' - f \otimes e') \alpha$. Let us define $u := e \otimes e' - f \otimes f', v := e \otimes f' - f \otimes e'$.

Then $w = u \cdot a + v \cdot \alpha$, so that any even $w \in (M \otimes N)^{\mathbb{D}_R}$ is an $R$-linear combination of $u$ and $v$, with $a, \alpha$ uniquely determined. By a completely analogous argument we see that for odd $w$, $w = u \cdot \alpha + v \cdot a$, for uniquely determined $a, \alpha$. Hence $\{u | v\}$ form a homogeneous basis of $(M \otimes N)^{\mathbb{D}_R}$, and $(M \otimes N)^{\mathbb{D}_R}$ is a free $R$-module of rank $1|1$. 

\end{proof}

\noindent {\bf Remark.} A similar proposition can easily be proven for the tensor product $M'\otimes N'$ of free rank 1 left (resp. right) $\mathbb{D}^o_R$-modules $M'$ and $N'$; the arguments are essentially the same as the above.\\

Now we may define our products. Let us choose a $\sqrt{-1}$ in $k$. Then given an ordered pair of $\Pi$-invertible sheaves $(S, \phi)$ and $(S', \phi')$ on a $B$-superscheme $X$, we may form two canonically defined (up to our choice of $\sqrt{-1}$) sheaves of $\mathcal{O}_X$-modules, denoted by $S \boxtimes S'$ and $S \boxtimes_o S'$, as follows.

To define $S \boxtimes S$, $(S, \sqrt{-1} \phi)$ is regarded as a sheaf of left $\underline{\mathbb{D}}$-modules, $(S, \phi')$ as a sheaf of right $\underline{\mathbb{D}}$-modules, so that $S \otimes S'$ is a sheaf of $(\underline{\mathbb{D}}, \underline{\mathbb{D}})$-bimodules. Then we define:

\begin{equation*}
S \boxtimes S' := (S \otimes S')^{\underline{\mathbb{D}}}.
\end{equation*}

\medskip

\noindent More explicitly, for each open set $U$, $(S \boxtimes S')(U)$ is the $\mathcal{O}_U$-module generated by:

\begin{align*}
&\{s \otimes s' \in \mathcal{O}_{S \otimes S'}(U): \sqrt{-1} \phi(s) \otimes s' =  (-1)^{|s|} s \otimes \phi'(s'), s \in \mathcal{O}_S(U), \\ &s' \in \mathcal{O}_{S'}(U), s, s' \text{ homogeneous} \}.
\end{align*}

\medskip

It is routine to check this is a sheaf, since $\phi, \phi'$ are global endomorphisms. Applying Prop \ref{supercommutantfree} to sufficiently small open sets $U$, we see that $S \boxtimes S'$ so defined is a locally free sheaf of $\mathcal{O}_X$-modules of rank $1|1$. %given any point $x \in |X|$, there exists a neighborhood $U$ in which $\mathcal{O}_S, \mathcal{O}_{S'}$ are free $\mathcal{O}_U$-modules, and $\mathbb{D}(U)$-modules of rank $1$.

To define $S \boxtimes_o S'$, we instead regard $(S, \phi)$ as a sheaf of left $\underline{\mathbb{D}}^o$-modules and $(S', \sqrt{-1} \phi')$ as a sheaf of right $\underline{\mathbb{D}}^o$-modules; then $(S \otimes S', \phi, \sqrt{-1} \phi')$ is a sheaf of $(\underline{\mathbb{D}}^o, \underline{\mathbb{D}}^o)$-bimodules, and we define $S \boxtimes_o S' := (S \otimes S')^{\underline{\mathbb{D}}^o}$. For each open set $U$, $(S \otimes S')^{\underline{\mathbb{D}}^o}(U)$ is the $\mathcal{O}_U$-module generated by:

\begin{align*}
&\{s \otimes s' \in \mathcal{O}_{S \otimes S'}(U): \phi(s) \otimes s' =  \sqrt{-1}(-1)^{|s|} s \otimes \phi'(s'), s \in \mathcal{O}_S(U), \\ &s' \in \mathcal{O}_{S'}(U), s, s' \text{ homogeneous} \}.
\end{align*}

By the remark following Prop. \ref{supercommutantfree}, we may apply the $\mathbb{D}^o$-analogue of Prop. \ref{supercommutantfree} to show that $S \boxtimes_o S'$ is also a locally free sheaf of $\mathcal{O}_X$-modules of rank $1|1$.

\subsection{Equivalence with the composition of Voronov, Manin, and Penkov} 

In the category of complex supermanifolds, Voronov, Manin, and Penkov \cite{VMP} define the composition of two $\Pi$-invertible sheaves $(S, \phi),  (S', \phi')$ as follows: fix a $\sqrt{-1}$. Then $\phi \otimes \phi'$ is an even endomorphism of square $-1$ on $S \otimes S'$. The eigenspaces for $\phi \otimes \phi'$, which necessarily have eigenvalues $\pm \sqrt{-1}$, are what they call the result of the composition of $(S, \phi)$ and $(S', \phi')$. These eigenspaces are $1|1$ locally free sheaves.

This definition can be carried over to the category of $B$-superschemes without change. In this context, we now demonstrate the equivalence of their definitions with our products $\boxtimes$ and $\boxtimes_o$.

Regarding $S \otimes S'$ as a sheaf of $(\underline{\mathbb{D}}, \underline{\mathbb{D}})$-bimodules via $\sqrt{-1}\phi$ and $\phi'$, we claim that $S \boxtimes S'$ equals the $\sqrt{-1}$ eigenspace of $\phi \otimes \phi'$. For if $s \otimes s'$ is a basic element of $S \otimes S'$, we have:

\begin{align*}
&\sqrt{-1} (\phi (s) \otimes s') = (-1)^{|s|} (s \otimes \phi'(s'))\\
\iff &\sqrt{-1} (\phi^2(s) \otimes s') = (-1)^{|s|} (\phi \otimes 1) \cdot (s \otimes \phi'(s')) \\
\iff & \sqrt{-1} (s \otimes s') = (\phi \otimes \phi') \cdot (s \otimes s').
\end{align*}

\medskip

\noindent and the same is true of linear combinations of basic elements. 

Similarly, if we regard $S \otimes S'$ as a sheaf of $(\underline{\mathbb{D}}^o, \underline{\mathbb{D}}^o)$-bimodules, via $\phi$ and $\sqrt{-1} \phi'$, the $-\sqrt{-1}$-eigenspace of $\phi \otimes \phi'$ equals the product $S\boxtimes_o S'$; the arguments are entirely analogous to the ones just given.


\begin{thebibliography}{99}

\bibitem{CCF} Carmeli, C.~, Caston, L.~, Fioresi, R.~ \textit{Mathematical Foundations of Supersymmetry.} EMS, 2011.

\bibitem{Del1} Deligne, P.~ Email to Y.I. Manin, 2010.

\bibitem{Del2} Deligne, P.~ Personal communication, 2010.

\bibitem{DM} Deligne, P., Morgan, J.~  {\it Notes on supersymmetry 
(following J.~Bernstein)}. In {\it Quantum Fields and Strings. 
A Course for Mathematicians}, Vol.~1, AMS, 1999.

\bibitem{Gro} Grothendieck, A.~ \textit{Le groupe de Brauer I, II, III}. In \textit{Dix ExposŽs sur La Cohomologies de Schemas.} North-Holland, 1968.

\bibitem{Kwok} Kwok, S.~ \textit{Super Morita theory.} arXiv:1301.5246.

\bibitem{Leites}
Leites, D.~A.~  \textit{Introduction to the theory of supermanifolds.}  
Russian Math. Surveys {\bf 35}, no. 1 (1980), 1-64.

\bibitem{Levin}
Levin, A.~M.~ \textit{Supersymmetric elliptic curves}, Funct. Analysis and Appl. {\bf 21} no. {3}, (1987), 243-244.

\bibitem{ma1}
Manin, Y.~I.~  \textit{Topics in Noncommutative Geometry.}
Princeton University Press, 1991. 

\bibitem{ma2}
Manin, Y.~I.~   \textit{Gauge Field Theory and Complex Geometry.} 
Translated by N. Koblitz and J.R. King.  Springer-Verlag, 1988.

\bibitem{Var} Varadarajan, V.~S.~  {\it Supersymmetry for
    Mathematicians: An Introduction.} Courant Lecture Notes  {\bf 1},
  AMS, 2004.
  
\bibitem{VMP} Voronov, A.~A.~, Manin, Y.~I.~, and Penkov, I.~B.~ \textit{Elements of supergeometry.} J. Soviet Math. {\bf 51} no. 1, (1990), 2069-2083.

\bibitem{Wes} Westra, D.~B.~ \textit{Superrings and supergroups.} Ph.D. thesis, Universitat Wien, 2009.

\end{thebibliography}
\end{document}